\theoremstyle{plain}
\newtheorem{theorem}{Theorem}[section]
\newtheorem{lemma}[theorem]{Lemma}
\newtheorem{proposition}[theorem]{Proposition}
\theoremstyle{definition}
\newtheorem*{definition}{Definition}
\theoremstyle{remark}
\newcommand{\C}{\mathbb{C}}
\newcommand{\Z}{\mathbb{Z}}
\newcommand{\T}{\mathcal{T}}
\newcommand{\z}{\mathbf{z}}
\newcommand{\Hy}{\mathbb{H}}
\newcommand{\Oc}{\mathcal{O}}
\newcommand{\Sc}{\mathcal{S}}
\newcommand{\B}{\mathcal{B}}
\newcommand{\Pc}{\mathcal{P}}
\def\Zbb{\mathbb{Z}}
\begin{document}

\newpage
\title{On the volume  and the Chern-Simons invariant for the hyperbolic alternating knot orbifolds}
\author{Ji-Young Ham and Joongul Lee}
\address{Da Vinci College of General Education, Chung-Ang University, 
General Education Building, 84 HeukSeok-Ro, DongJak-Gu, Seoul,
06974,\\
  Korea} 
\email{jiyoungham1@gmail.com.}

\address{Department of Mathematics Education, Hongik University, 
94 Wausan-ro, Mapo-gu, Seoul,
04066\\
   Korea} 
\email{jglee@hongik.ac.kr}

\subjclass[2010]{57M27,57M25.}

\keywords{volume, Chern-Simons invariant, orbifold, explicit formula, alternating knot, extended Bloch group, Riley-Mednykh polynomial}

\maketitle 

\markboth{Ji-Young Ham and Joongul Lee} 
{On the volume  and the Chern-Simons invariant for the alternating knot orbifolds}

\begin{abstract}
We extend the Neumann's methods in~\cite{N2} and give the explicit formulae for the volume and the Chern-Simons invariant for hyperbolic alternating knot orbifolds. 
\end{abstract}
\maketitle

\section{Introduction}
 
 We extend the Neumann's methods in~\cite{N2} using Zickert's methods in~\cite{Z1,Z16,GZ17} and the formula for the complex volume of a hyperbolic knot in~\cite{CKK1} to present explicit formulae for the  volume and the Chern-Simons invariant of the hyperbolic alternating knot orbifolds.

In~\cite{N2}, Neumann defined the extended Bloch group, $\widehat{\B} (\C)$, and showed that this group is isomorphic to $H_3(\text{PSL}(2,\C);\Z)$ 
by lifting the Bloch-Wigner map 
$$H_3(\text{PSL}(2,\C);\Z) \rightarrow \B(\C)$$
 to an isomorphism
\begin{align*}
\lambda:\, \xymatrix{H_3(\text{PSL}(2,\C);\Z) \ar[r]^-{\cong} & \widehat{\B} (\C)}.
\end{align*}
On $\widehat{\B} (\C)$ he defined an analytic function (an extended version of Roger's dilogarithm function);
$$R: \, \widehat{\B} (\C) \rightarrow \C/\pi^2 \Z$$
and showed that the composition
$$R \circ \lambda: \, H_3(\text{PSL}(2,\C);\Z) \rightarrow \C/\pi^2 \Z$$
is the Cheeger-Chern-Simons class, which  also can be written as $i(\text{vol}+i \text{cs})$, where $cs$ is the universal Chern-Simons class.
Neumann showed that any complete hyperbolic 3-manifold $M$ of finite volume has a natural “fundamental class” in $H_3(\text{PSL}(2,\C);\Z)$. 
Hence this fundamental class of $M$ in $H_3(\text{PSL}(2,\C);\Z)$ determines an element $\widehat{\beta}(M) \in \widehat{\B} (\C)$. 
Neumann describe $\widehat{\beta}(M)$ directly in terms of an ideal triangulation $\T$ of $M$. By substituting truncated simplices for simplices, one can define $\overline{\T}$ of $M$ from $\T$ of $M$.
In~\cite{Z1,Z16,GZ17}, Zickert introduced a way of using a $P$-decoration on $\widetilde{\overline{\T}}$  (liftings of $\overline{\T}$ to the universal cover of $M$) to compute complex volume, where $P$ is the subgroup of upper triangular matrices with $1$'s on the diagonal. Let $\rho$ be a geometric representation. A $P$-decoration of $\rho$ is 
a $\rho$-equivariant assignment of $P$-cosets to each triangular face of $\widetilde{\overline{\T}}$. $P$-decorations
are in one-to-one correspondence with Fattened natural cocycles on $\overline{\T}$. A $P$-decoration of $\rho$ involves a choice of fundamental rectangle for each boundary component such that the triangulation induced on each boundary torus obtained by identifying the sides of the chosen fundamental rectangle agrees with the triangulation of the boundary torus induced by $\T$.
For the computation, we use the triangulation used in~\cite{CKK1}. The triangulation used in~\cite{CKK1} is dipicted on the chosen fundamental rectangle. Therefore we identify the sides of the fundamental rectangle by the identity. In Section~\ref{sec:shape}, in particular, we show that the shape of a $J(2n,-2m)$ knot orbifold can be obtained by a root of a certain polynomial and this polynomial, in fact, coincides with the Riley-Mednykh polynomial. 
In Section~\ref{sec:cvolume}, we present some explicit complex volume of $J(2n,-2m)$ knot orbifolds.

Some volume formulae for hyperbolic cone-manifolds and some formulae for Chern-Simons invariants for hyperbolic orbifolds of knots and links based on Schl\"{a}fli formula can be found in~\cite{HLM1,K1,K2,MR1,MV1,M1,DMM1,HLMR1,HLM2,HMP,HL1,HLMR,Tran1, Tran2,HLM2,HL,HL2,HLMR,A,A1}.%~\cite{A1,A,DMM1,HL-HL2,HLMR1-HLM1,HLM2,K1-M,Tran1, Tran2}.
Some references for cone-manifolds are~\cite{CHK,T1,K1,P2,HLM1,PW,HMP}.

%%%%%%%%%%%%%%%%%%%%%%%%%%%%%%%%%%%%%%%%%%%%%%%%%%%%%%%%%%%%%%
%%%%%%%%%%%%%%%%%%%%%%%%%%%%%%%%%%%%%%%%%%%%%%%%%%%%%%%%%%%%%%%
\section{The hyperbolic structure of alternating knot orbifolds}

Let $K$ be a hyperbolic alternating knot. The $r$-fold cyclic covering, $M$, of $S^3 \backslash K$ is the covering space corresponding to an index $r$ subgroup of the fundamental group of $\pi_1(S^3 \backslash K)$. An index $r$ subgroup can be identified with the kernel of the composite of the following two maps:
$$\pi_1(S^3 \backslash K) \xrightarrow{\phi} H_1(S^3 \backslash K) \cong \Z \xrightarrow{p} \Z_r.$$ From~\cite{Fox}, topologically, the $r$-fold cyclic covering branched along $K$ is the completion of $M$.
Naturally, a topological ideal triangulation of $S^3 \backslash K$ gives the topological ideal triangulation of a sheet of M and $r$ times of it will give the topological ideal triangulation of $M$. A solution to the gluing and completeness equations of~\cite{NZ} will give the hyperbolic structure to $M$. Since the gluing equations will be the same for each sheet and the product of moduli corresponding to the meridian is the $r$th power of the product of moduli corresponding to the meridian restricted to a sheet, a solution can be found by restricting the equations to a sheet (for explicit computations, see Section~\ref{sec:proof}), which will also give the hyperbolic structure to a sheet and to the completion of a sheet, the orbifold 
$\Oc(K,r)$ of $K$ with cone-angle $2 \pi/r$.
%%%%%%%%%%%%%%%%%%%%%%%%%%%%%%%%%%%%%%%%%%%%%%%%%%%%%%%%%%%%%%%%%%%%%%%%%%%%%

But the existence of the solution is not always guaranteed. Hence we will fix an ideal triangulation which will guarantee it. We assume the planar projection of an alternating knot is always reduced. For the topological ideal triangulation of $S^3 \backslash K$, we take the triangulation of $S^3\backslash \{K \cup \text{two points} \}$ described by~\cite{CKK1,GMT}, the \emph{octahedral $4$-term triangulation}, and to give the geometric structure to $M$ we send two points to $\partial \overline{\Hy}^3$ equivariantly, making sure that the resulting ideal simplices are non-degenerate (all four vertices distinct) as in~\cite[p.465]{N2}. With this ideal triangulation, a solution to the gluing and completeness equations can always be obtained to recover the hyperbolic structure of 
$S^3 \backslash K$~\cite{GMT,SY}. Hence the deformed solution can always be obtained to give the hyperbolic structure to a sheet of $M$ for sufficiently large $n$ in the sense of~\cite[Chap.5]{T1} and~\cite{K1}, therefore to $M$ and 
$\Oc(K,r)$~\cite[Sec.3]{DG1}.
 %%%%%%%%%%%%%%%%%%%%%%%%%%%%%%%%%%%%%%%%%%%%%%%%%%%%%%%%%
%%%%%%%%%%%%%%%%%%%%%%%%%%%%%%%%%%%%%%%%%%%%%%%%%%%%%%%%%%

\section{extended Bloch group and an extended version of Rodger's dilogarithm function}
The general reference for this section is ~\cite{N2, Z1,DZ,GZ}.

\begin{definition}
The \emph{pre-Bloch group} $\Pc(\C)$ is an abelian group generated by symbols $[z]$, $z \in \C \backslash \{0, 1\}$, subject to the relation
$$[x]-[y]+ \left[\frac{y}{x} \right]-\left[\frac{1-x^{-1}}{1-y^{-1}} \right] +\left[\frac{1-x}{1-y} \right] =0. $$
This relation is called the \emph{five-term relation}.
\end{definition}
An element in pre-Bloch group can be considered as a cross-ratio of a congruence class of an ideal simplex. We consider the vertex ordering as part of the data defining an ideal simplex.

\begin{definition}
The Bloch group $\B(\C)$ is the kernel of the homomorphism
$$\nu : \, \Pc ( \C ) \rightarrow \wedge^{2}_{\Z} ( \C^{*}) $$
defined by mapping a generator $[z]$ to $z \wedge (1 - z)$.
\end{definition}
The image of the map $\nu$ is  the complex version of the \emph{Dehn invariant} and make the $ker(\nu)$ into the orientation sensitive
\emph{scissors congruence group} containing scissors congruence classes of complete hyperbolic 3-manifolds of finite volume.

\begin{definition}
Let $\Delta$ be an ideal simplex with cross-ratio $z$. A (combinatorial) flattening of $\Delta$ is a triple of complex numbers of the form
\begin{align*}
(w_0, w_1, w_2) =  (\log{z} + p \pi i, - \log{(1- z)} + q \pi i,
-\log{z} + \log{(1- z)} - p \pi i-q \pi i)
\end{align*}
with $p, q \in \Z$. We call $w_0,\ w_1$ and $w_2$ log parameters.
\end{definition}
We always use the principal branch having imaginary part in the interval $(-\pi, \pi ]$. 
Since log parameters uniquely determine $z$, we can write a flattening as $(z; p, q)$ \cite[Lemma 3.2]{N2}.

\begin{definition}
Let $z_0, \ldots , z_4$ be five distinct points in $\C \cup \{\infty\}$, and let $\Delta_ i$ denote the simplices 
$[z_0, \ldots ,\hat{z}_i, \ldots,z_4]$. Suppose that 
$(w_0^i ,w_1^i ,w_2^i)$ are flattenings of the simplices  $\Delta_i$. Every edge $z_iz_j$ belongs to exactly three of the  $\Delta_i$'s and therefore has three associated log parameters. The flattenings are said to satisfy the flattening condition if for each edge the signed sum of the three associated log parameters is zero. The sign is positive
if and only if $i$ is even.
\end{definition}

It follows directly from the definition that the flattening condition is equivalent to the
following ten equations:
\begin{alignat*}{2}
& z_0z_1  :  w_0^2 -w_0^3 +w_0^4 =0, \qquad && z_0z_2  : -w_0^1 -w_2^3 +w_2^4 =0, \\
& z_1z_2  :  w_0^0 -w_1^3 +w_1^4 =0,  && z_1z_3  : w_2^0 +w_1^2 +w_2^4 =0, \\
& z_2z_3  :  w_1^0 -w_1^1 +w_0^4 =0,  && z_2z_4  : w_2^0 -w_2^1 -w_0^3 =0, \\
& z_3z_4  :  w_0^0 -w_0^1 +w_0^2 =0,  && z_3z_0  : -w_2^1 +w_2^2 +w_1^4 =0, \\
& z_4z_0  : -w_1^1 +w_1^2 -w_1^3 =0,  && z_4z_1  : w_1^0 +w_2^2 -w_2^3 =0.
\end{alignat*}

\begin{definition}
The \emph{extended pre-Bloch group} $\widehat{\Pc}(\C)$ is the free abelian group on flattened ideal simplices subject to the relations
\begin{enumerate}
\item   $\sum^4_{i = 0} ( -1 )^ i ( w_0^i , w_1^i , w_2^i ) = 0$ if the flattenings satisfy the flattening condition, \\
\item  $(z;p,q)+(z;p',q') = (z;p,q')+(z;p',q)$.
\end{enumerate}
\end{definition}
The first relation lifts the relation the five term relation. It is therefore called the \emph{lifted five-term relation}. The second relation is called the \emph{transfer relation}. We shall denote the class of $(z; p, q)$ in $\widehat{\Pc}(\C)$ by $[z; p, q]$.

\begin{definition}
The extended Bloch group $\widehat{\B}(\C)$ is the kernel of the homomorphism
$$\nu' : \widehat{\Pc} ( \C ) \rightarrow \wedge^ 2_{\Z}(\ C )$$ 
defined on generators by $(w_0, w_1, w_2)  \mapsto w_0 \wedge w_1$.
\end{definition}

The following Theorem~\ref{thm:chi} describe the relationship of the extended groups with the “classical” ones.
\begin{theorem}~\cite[Theorem 7.5]{N2} \label{thm:chi}
There is a commutative diagram with exact rows and columns
$$
\xymatrix{& 0 \ar[d] & 0 \ar[d] & 0 \ar[d]\\
0 \ar[r] & \mu^* \ar[d]_{\chi | \mu^*} \ar[r] & \C^* \ar[d]_{\chi} \ar[r] & \C^*/\mu^*  \ar[d]_{\beta} \ar[r] & 0 \\
0  \ar[r] &  \widehat{\B} (\C) \ar[d] \ar[r] & \widehat{\Pc} (\C) \ar[d] \ar[r]^{\nu} & \C \wedge \C  \ar[d]_{\epsilon} \ar[r] & K_2(\C) \ar[d]_{=} \ar[r]  & 0 \\
0 \ar[r] & \B(\C) \ar[d] \ar[r] & \Pc(C) \ar[d] \ar[r]^{\nu'} & \C^* \wedge \C^*  \ar[d] \ar[r] & K_2(\C) \ar[d] \ar[r]  & 0  \\
& 0 & 0 & 0 & 0}
$$
Here $\mu^{*}$ is the group of roots of unity and the labeled maps defined as follows:
\begin{align*}
\chi ( z ) & = [ z ; 0 , 1 ] - [ z ; 0 , 0 ] \in \widehat{\Pc} ( \C ) ; \\
\nu [z; p, q] & = (\log{z} + p \pi i) \wedge (- \log{(1 - z)} + q \pi i); \\
\nu'[z]  & = 2 (z \wedge (1 - z)) ; \\
\beta [z] & = \log{z} \wedge \pi i; \\
\epsilon(w_1 \wedge w_2) &=-2(e^{w_1} \wedge e^{w_2});
\end{align*}
and the unlabeled maps are the obvious ones.
\end{theorem}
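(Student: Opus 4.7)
The plan is to establish the diagram by addressing three tasks in order: well-definedness of every labeled map, commutativity of each of the six squares, and then exactness at each of the interior nodes. Since the bottom row is the classical Bloch--Wigner--Dupont--Sah exact sequence (completed by Suslin at the $K_{2}(\C)$ term), and the top row is trivially exact, the real work consists of lifting the bottom row to the middle row and then coupling this lift with the two outside columns.

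I would begin with well-definedness of the two non-trivial maps $\chi$ and $\nu$. The transfer relation shows immediately that $\chi(z) = [z;0,1] - [z;0,0]$ is independent of auxiliary flattening choices, and a short calculation gives $\nu \circ \chi(z) = \log z \wedge \pi i$. When $z \in \mu^{*}$, both $\log z$ and $\pi i$ lie in $\tfrac{1}{n} \pi i \Z$ for some $n$, so the wedge vanishes by alternation and $\chi|_{\mu^{*}}$ really lands in $\widehat{\B}(\C)$. To see that $\nu$ respects the lifted five-term relation, I would substitute the ten flattening equations for the edges $z_{i}z_{j}$ into $\sum (-1)^{i}\, w_{0}^{i} \wedge w_{1}^{i}$ and check that, after expanding, the cross terms of the form $\log(\ast) \wedge \pi i$ cancel by the flattening relations, leaving the classical five-term identity in $\C^{*} \wedge \C^{*}$ pulled back along the exponential.

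Commutativity of the two right-column squares reduces to the long exact wedge sequence attached to $1 \to 2\pi i \Z \to \C \xrightarrow{\exp} \C^{*} \to 1$; the factor $-2$ in $\epsilon$ is there precisely to balance the factor $2$ in $\nu'[z] = 2(z \wedge (1-z))$. Commutativity of the remaining squares is immediate from the definitions together with $\log(z(1-z)) \equiv \log z + \log(1-z) \pmod{2\pi i \Z}$ and the observation that $\chi(z)$ reduces to $[z]-[z] = 0$ in the classical pre-Bloch group.

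The hard part will be establishing exactness of the middle row and of the three columns. The right column is a diagram chase through the exponential sequence on $\wedge_{\Z}$, requiring care because $\wedge_{\Z}$ is not right exact in an obvious way. The technical heart of the theorem is exactness of the middle row at $\widehat{\Pc}(\C)$: given a class in $\ker(\nu)$ whose image in $\Pc(\C)$ lies in $\B(\C)$ by the classical sequence, one must build a lift in $\widehat{\B}(\C)$ by correcting with a suitable element of $\chi(\C^{*})$. This step uses the design principle of $\widehat{\Pc}$, namely that the lifted five-term relation differs from its classical shadow by precisely an element in the image of $\chi$; the transfer relation (2) is what ensures such a correction is available. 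Finally, the left column, and in particular the injectivity of $\chi|_{\mu^{*}}$, follows by applying the snake lemma to the middle and bottom rows; as an independent check, one may evaluate the extended Rogers dilogarithm $R$ on $\chi(\zeta)$ for a root of unity $\zeta$ and use that its imaginary part separates the images of distinct roots.
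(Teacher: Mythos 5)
The paper does not prove this theorem: it is quoted verbatim as \cite[Theorem 7.5]{N2} and used as background. There is therefore no ``paper's own proof'' to compare against; the appropriate reference for the actual argument is Neumann's original paper, Sections 5--7 of \cite{N2}.

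That said, a few observations on your sketch as a reconstruction of Neumann's argument. Your computations of $\nu\circ\chi(z)=\log z\wedge\pi i$ and the vanishing of this for $z\in\mu^*$ (since $\C\wedge_\Z\C$ is a $\Q$-vector space and $\log z\wedge\pi i$ is visibly torsion) are correct, and the way you relate the factor $-2$ in $\epsilon$ to the factor $2$ in $\nu'$ is the right intuition. However, you pass too quickly over the well-definedness of $\chi$: the transfer relation only shows that $[z;p,q+1]-[z;p,q]$ is independent of $p,q$; it does \emph{not} show that $\chi:\C^*\to\widehat{\Pc}(\C)$ is a group homomorphism, i.e.\ $\chi(z_1z_2)=\chi(z_1)+\chi(z_2)$, which in \cite{N2} requires a separate argument via the lifted five-term relation (it is roughly Lemma 7.2 there). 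Likewise, verifying that $\nu$ kills the lifted five-term relation is a concrete ten-equation check that you gesture at but don't carry out, and the exactness of the middle row at $\widehat{\Pc}(\C)$ and $\C\wedge\C$ is the genuine content of Neumann's Theorem 7.5, resting on his analysis of $H_*(\widehat{\C};\Z)$ for the Riemann surface $\widehat{\C}$ of $\log z+\log(1-z)$. Your snake-lemma remark for the left column is fine once the other two columns and the middle and bottom rows are in place, but it cannot substitute for those. In short: your outline is in the right spirit, but it is not self-contained, and in the context of this paper the correct response is simply to cite \cite{N2}.
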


The following definition gives an extended version of Rodger's dilogarithm function which is defined by Neumann.
\begin{definition}
Define
\begin{align*}
R(z; p, q) & = \mathcal{R}(z) + \frac{\pi i}{2} (p \log{(1 - z)} + q \log{z}) -\frac{\pi^2}{6}\\
& =\textnormal{Li}_2(z)-\frac{\pi^2}{6}+ \frac{1}{2} q \pi i \log{z}+\frac{1}{2} \log{(1-z)} (\log{z} +p \pi i )
\end{align*}
where $\mathcal{R}$ is the Rogers dilogarithm function
$$\mathcal{R}(z)= \frac{1}{2} \log{z} \log{(1-z)}-\int_0^ z \frac{\log{(1-t)}}{t}dt$$
and $\textnormal{Li}_2$ is the dilogarithm function
$$\textnormal{Li}_2(z)= -\int_0^ z \frac{\log{(1-t)}}{t}dt.$$
\end{definition}
Then $R$ gives a homomorphism $R:\ \widehat{\Pc} (\C) \rightarrow \C/ \pi^2 \Z$~\cite[Proposition 2.5]{N2}.
%%%%%%%%%%%%%%%%%%%%%%%%%%%%%%%%%%%%%%%%%%%%%%%%%%%%%%%%%%%%
%%%%%%%%%%%%%%%%%%%%%%%%%%%%%%%%%%%%%%%%%%%%%%%%%%%%%%%%%%%%%%
\section{Octahedral $4$-term triangulation and the potential function}
\subsection{Octahedral $4$-term triangulation}
Among the references for this subsection, we use Section 3 of~\cite{Weeks} and Section 3 of~\cite{CKK1}. Denote the tubular neighborhood of an alternating knot $K$, whose planar projection on the equatorial $S^2$ of $S^3$ is reduced, by 
$S^2 \times I$. Now, triangulate $(S^2 \times I) \backslash K$ instead of $S^3 \backslash K$ with two inside points as vertices. To triangulate $(S^2 \times I) \backslash K$, cut straight down through it just as you would cut cookie dough with a cookie cutter. Then up to homotopy for each crossing of $K$, we have an octahedron as in the right side of Figure~\ref{fig:order}~\cite{Weeks,KKY}. We assign vertex orderings of the tetrahedra in the right side of Figure~\ref{fig:order} by assigning $0$ to $E_k$, $1$ to $F_k$, $2$ to $A_k$ and $C_k$, and $3$ to $B_k$ and $D_k$. Then, when two edges are glued together in the triangulation, the orientations of the two edges induced by each vertex orderings coincide. This was called \emph{edge-orientation consistency} in~\cite{CKK1}. This property is required to use the formulae in~\cite{N2,Z1}. Note that right before using a cookie cutter, $A_k$ and $C_k$ (resp. $B_k$ and $D_k$) were one point. After some deformation up to homotopy, they became two points. Hence they have the same vertex ordering. We assign formal variables $z_a,\ z_b,\ z_c,\ z_d$ to $A_k,\ B_k,\ C_k,\ D_k$, in order, as in the left side of Figure~\ref{fig:order}. We also assign a formal shape parameter to each tetrahedron as in Figure~\ref{fig:shapeParameter}.
\begin{figure} 
\begin{center}
\resizebox{10cm}{!}{\includegraphics{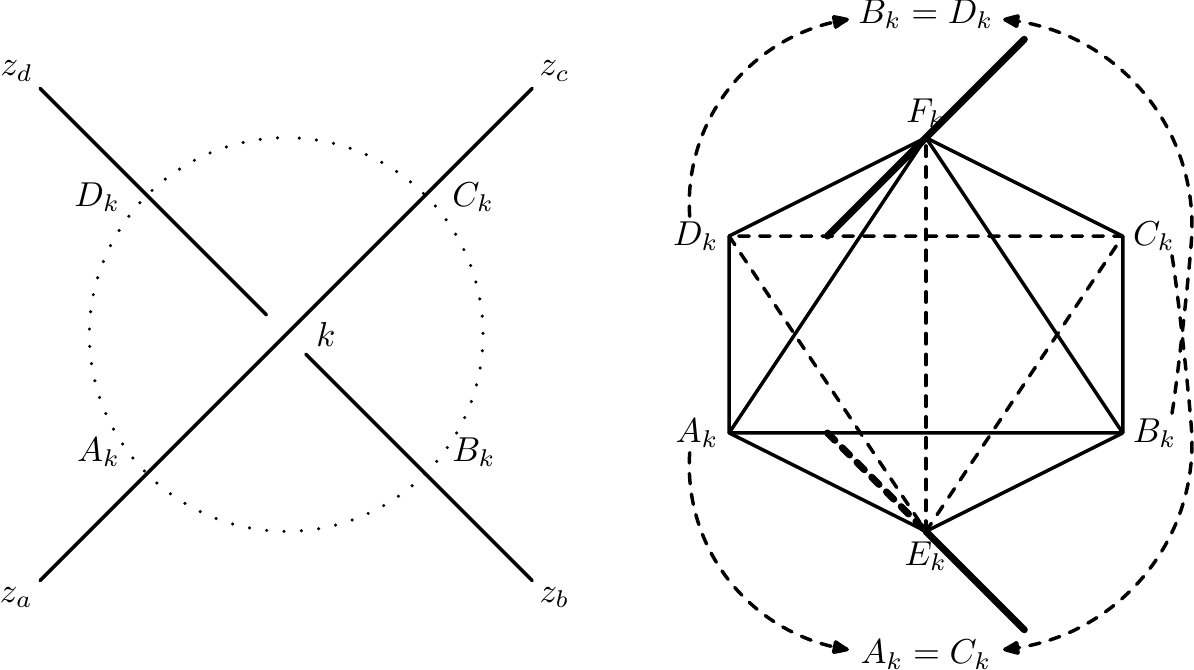}}
\caption{Octahedron on the crossing k.}
\label{fig:order}
\end{center} 
\end{figure}
\begin{figure} 
\begin{center}
\resizebox{5cm}{!}{\includegraphics{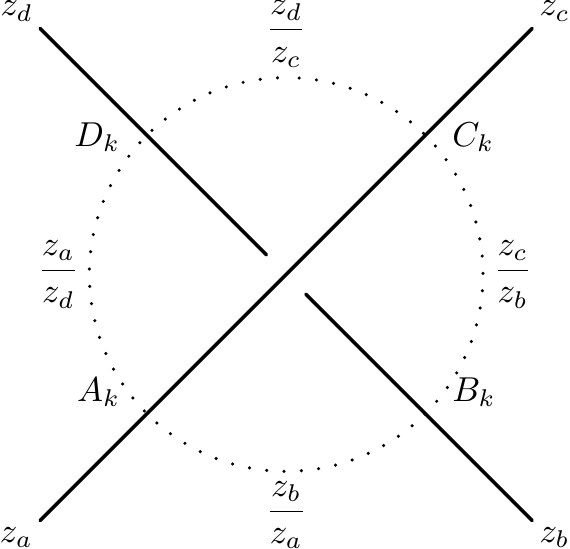}}
\caption{Shape parameters on the crossing k.}
\label{fig:shapeParameter}
\end{center} 
\end{figure}

%%%%%%%%%%%%%%%%%%%%%%%%%%%%%%%%%%%%%%%%%%%%%%%%%%%%%%%%%%%%%%%%%%%%
\subsection{Potential function}
The general reference for this subsection is~\cite{CKK1}.
Let $D$ be a reduced planar projection of an alternating knot $K$ with $n/2$ crossings with $n$ even. For each crossing of $D$, we define the \emph{potential function of a crossing} as the sum of four terms as in Figure~\ref{fig:potential}.
\begin{figure} 
\begin{center}
\includegraphics{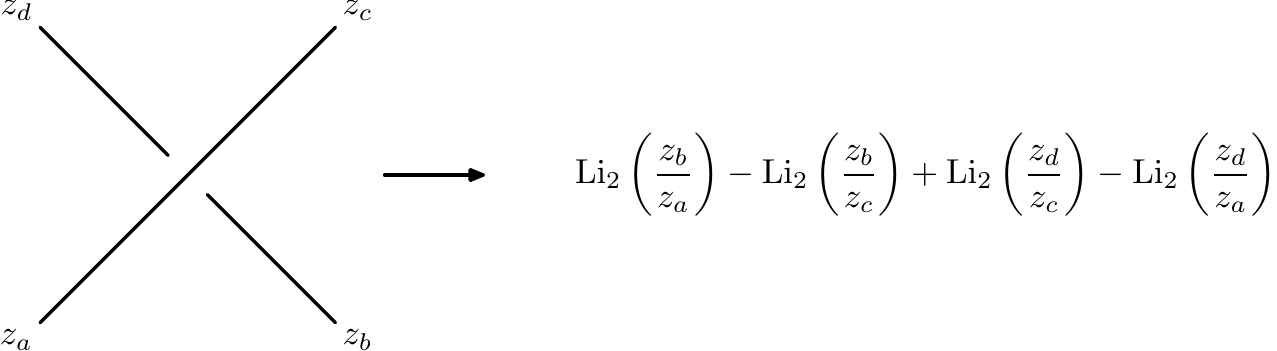}
%\scalebox{0.8}{\input{potential.pdf_tex}}
\caption{The potential function for a crossing.}
\label{fig:potential}
\end{center} 
\end{figure}

\begin{definition} \label{def:potential}
The potential function $V(z_1, \ldots,z_{n})$ of the diagram $D$ is defined as the summation of all potential functions of the $n/2$ crossings.
\end{definition}

Define 
\begin{equation}
\mathcal{H}:= \left \{exp \left(z_k \frac{\partial V}{\partial z_k}\right)=exp \left(\pm \frac{2 \pi i}{r} \right) \left |\ k=1, \ldots, n \right. \right \}
\end{equation}
where $+$ corresponds to Type I of Figure~\ref{fig:side} and $-$ corresponds to Type II of Figure~\ref{fig:side}.

\begin{figure} 
\begin{center}
\resizebox{10cm}{!}{\includegraphics{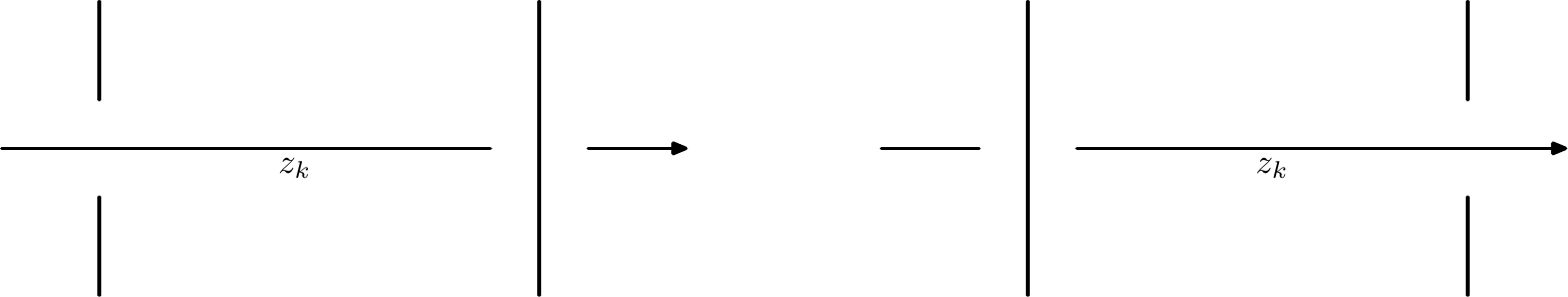}}
\caption{The side of Type I (left) and the side of Type II (right)}
\label{fig:side}
\end{center} 
\end{figure}

Then $\mathcal{H}$ gives the following set of equations which we also write $\mathcal{H}$ for notational convenience:

\begin{equation}
 \left \{exp \left(\sum_1^r z_k \frac{\partial V}{\partial z_k}\right)=1 \left |\ k=1, \ldots, n \right. \right \}.
\end{equation}

Recall that $M$ is the $r$-fold cyclic covering of $S^3 \backslash K$.
Then $\mathcal{H}$ becomes the hyperbolicity equations of $M$~\cite[Proposition 1.1]{CKK1}. In~\cite{CKK1}, They proved it for knots. But the similar argument works for cyclic coverings, too. Note that the set of equations $\mathcal{H}$ consists of the completeness conditions along the meridian
and this gives Thurston's gluing equations for $M$ using similar argument to~\cite[Lemma 3.1]{CKK1}. 

Using the formula in Theorem~\ref{thm:main}, we can compute
\begin{align*}
V (z_1, . . . , z_{n}) -\sum_{k=1}^{n} \left(z_k \frac{\partial V}{\partial z_k} \right) \log{z_k} \qquad \\
\left(\textnormal{mod } \frac{\pi^2}{r} \textnormal{ for } r \textnormal{ odd and } \textnormal{mod } \frac{2 \pi^2}{r} \textnormal{ for } r
 \textnormal{ even}\right).
\end{align*}
for any solution of $\mathcal{H}$. Similar argument to~\cite[Lemma 2.2]{CKK1} shows that the value of the formula of a solution of 
$\mathcal{H}$ remains constant for each component of the solution set of $\mathcal{H}$.
Among the solution set of $\mathcal{H}$, denote the component which gives the geometric structure by $\mathcal{S}$.
%%%%%%%%%%%%%%%%%%%%%%%%%%%%%%%%%%%%%%%%%%%%%%%%%%%%%%%%%%%%%%%%

%%%%%%%%%%%%%%%%%%%%%%%%%%%%%%%%%%%%%%%%%%%%%%%%%%%%%%%%%%%%%%%%
%%%%%%%%%%%%%%%%%%%%%%%%%%%%%%%%%%%%%%%%%%%%%%%%%%%%%%%%%%%%%%%%
\section{complex volume of orbifolds}
Let $M'$ be the completion of $M$. Then $M'$ is a compact manifold. $M'$ can be obtained from the hyperbolic manifold $M$ by a hyperbolic Dehn filling~\cite{K88,Sak91}.
In~\cite[Theorem 14.5 and Theorem 14.7]{N2}, Neumann introduced two ways to compute the complex volume of a Dehn filled manifold. 
For our triangulation the following Theorem~\ref{thm:Neumann} can be easily applied when combined with Zickert's methods in~\cite{Z1,Z16,GZ17}.

\begin{theorem}~\cite[Theorem 14.7]{N2} \label{thm:Neumann}
Consider the degree one triangulation of $M′$. Then there exists flattenings $[x_i ; p_i, q_i]$ of the ideal simplices which satisfy the
conditions
\begin{enumerate}
\item[(i)] parity along normal paths is zero;
\item[(ii)] log-parameter about each edge is zero;
\item[(iii)] log-parameter along any normal path in the neighborhood of a $0$-simplex that represents an unfilled cusp is zero;
\item[(iv)] log-parameter along a normal path in the neighborhood of a $0$-simplex that represents a filled cusp is zero if the path is null-homotopic in the added solid torus.
\end{enumerate}
For any such choice of flattenings we have
\begin{align*}
\widehat{\beta} (M')=\sum_i \epsilon_i [x_i ; p_i , q_i] \in \widehat{\B} (\C)
\end{align*}
so
\begin{align} \label{equ:cxN}
i (\textnormal{vol}+i \, \textnormal{cs})(M')=\sum_i \epsilon_i R(x_i ; p_i, q_i).
\end{align}
\end{theorem}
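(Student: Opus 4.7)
The plan is to reduce the theorem to Neumann's existing treatment of complete hyperbolic 3-manifolds (Theorem 14.5 of~\cite{N2}) and extend it to the Dehn-filling setting. First I would verify existence of flattenings satisfying conditions (i)--(iv). Conditions (i) and (ii) are the usual parity and edge-gluing conditions; they can be viewed as vanishing conditions on cochains with values in $\Z/2$ and $\Z$ respectively, and the freedom to shift the integer parameters $(p_i,q_i)$ via the transfer relation $[z;p,q]+[z;p',q']=[z;p,q']+[z;p',q]$ is enough to kill the obstructions. Condition (iii) handles the unfilled cusps exactly as in the complete case: the log-parameters along the meridian and longitude of a complete cusp automatically vanish for a correctly normalized flattening. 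The new content is condition (iv), which demands that the log-parameter along a meridian that becomes null-homotopic in the added solid torus vanishes. I would realize this by first choosing any flattenings satisfying (i)--(iii) and then applying the transfer relation at each filled cusp to adjust the integer parameters until the meridional log-parameter is zero.

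Next I would show that $\sum_i \epsilon_i[x_i;p_i,q_i]$ lies in $\widehat{\B}(\C)$, i.e.\ that its image under $\nu:\widehat{\Pc}(\C)\to \C\wedge\C$ vanishes. Expanding $\nu[x_i;p_i,q_i]=(\log x_i+p_i\pi i)\wedge(-\log(1-x_i)+q_i\pi i)$ and reorganizing the sum edge by edge, the contributions at internal edges cancel by condition (ii), those at unfilled cusps cancel by condition (iii), and those at filled cusps cancel by condition (iv), while condition (i) prevents a residual $2$-torsion obstruction. This is the usual ``edge relation implies Dehn-invariant vanishes'' argument generalized to the filled setting.

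The main step is then to identify the class in $\widehat{\B}(\C)$ just constructed with the fundamental class $\widehat{\beta}(M')$. The fundamental class of $M'$ in $H_3(\mathrm{PSL}(2,\C);\Z)$ is determined by the geometric representation of $\pi_1(M')$, and the degree-one triangulation of $M'$ together with a developing map gives an explicit cycle representing it. The point is that condition (iv) precisely encodes the compatibility of the flattenings with the Dehn-filling data at each filled cusp, so that the cycle built from the flattened simplices represents $\lambda(\widehat{\beta}(M'))$. Once this is established, the complex volume formula in~\eqref{equ:cxN} follows immediately by applying $R$ and invoking $R\circ \lambda = i(\mathrm{vol}+i\,\mathrm{cs})$ as stated in the introduction.

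The hardest part, I expect, is this final identification. The obstruction is that the ideal triangulation of $M$ does not extend to an honest triangulation of $M'$, so one cannot simply pull back a cochain along a map of simplicial complexes; instead one must argue via a limiting procedure on the hyperbolic Dehn surgery space, or construct a bordism in $H_3(\mathrm{PSL}(2,\C);\Z)$ between the classes determined by $M$ and $M'$ whose boundary contribution is controlled exactly by the log-parameters at the filled cusps. The ``degree one'' hypothesis on the triangulation is crucial for this argument, as it ensures that orientation contributions $\epsilon_i$ add up to a genuine fundamental class rather than an integer multiple of it.
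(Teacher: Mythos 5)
This theorem is cited in the paper as \cite[Theorem 14.7]{N2} and is applied as a black box (in conjunction with Zickert's $P$-decoration machinery); the paper does not reproduce or reprove it, so there is no internal proof to compare against. Judged as a sketch of Neumann's own argument, your outline correctly identifies the four stages — existence of admissible flattenings, verification that the resulting chain lies in $\widehat{\B}(\C)$, identification with the fundamental class $\widehat{\beta}(M')$, and application of $R$ — but misdescribes the mechanisms Neumann uses for the two hard steps. The existence of flattenings satisfying (i)--(iv) is not achieved by ``shifting $(p_i,q_i)$ via the transfer relation'': the transfer relation $[z;p,q]+[z;p',q']=[z;p,q']+[z;p',q]$ is a relation in $\widehat{\Pc}(\C)$, not a move that adjusts a single flattening, and changing $(p_i,q_i)$ genuinely changes $[x_i;p_i,q_i]$. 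Neumann instead establishes existence by a cohomological computation on the chain complex of the triangulation, where the relevant obstruction groups are shown to vanish (this is where the combinatorics of the degree-one ideal triangulation enter). Likewise, the identification with $\widehat{\beta}(M')$ is not carried out by a limiting argument on Dehn-surgery space nor by constructing an explicit bordism as you suggest as alternatives; it follows from Neumann's earlier simplicial formula for $\widehat{\beta}$ together with the degree-one hypothesis, and condition (iv) is precisely what makes the filled cusp contribute trivially to the cycle. So your sketch locates the difficulties but substitutes speculative approaches for Neumann's actual, more rigid homological ones.
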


Since we are using a $P$-decoration of the geometric representation on $\widetilde{\overline{\T}}$  (liftings of $\overline{\T}$  to the universal cover of $M$) to compute complex volume, where $P$ is the subgroup of upper triangular matrices with $1$'s on the diagonal, we can apply Zickert's methods in~\cite{Z1,Z16,GZ17} to our octahedral triangulation to find flattening parameters 
$[x_i^{′}; p_i^{′}, q_i^{′}]$ for $M'$ which satisfy the conditions of Theorem~\ref{thm:Neumann}  except the condition (i) and (iii)~\cite[Theorem 6.5]{Z1}. Since $M'$ does not have unfilled cusp, condition $(iii)$ of Theorem~\ref{thm:Neumann} will be automatically satisfied. Denote $M'=M_r(K)$.
Without the condition (i) of Theorem~\ref{thm:Neumann}, the equation~(\ref{equ:cxN}) of Theorem~\ref{thm:Neumann} will still give the complex volume at worst modulo $\pi^2/2$~\cite[Lemma 11.3]{N2}. But since $R$ in Theorem~\ref{thm:main} and the volume function and the Chern-Simons function  in ~\cite[Theorem 3.9]{HLM3} are analytic and since with our triangulation the equation~(\ref{equ:cxN}) of Theorem~\ref{thm:Neumann} gives the complex volume modulo $\pi^2$ at $\Oc(K,\infty)$ when regarded as a knot complement~\cite[Theorem 1.2]{CKK1}, $R$ will give the complex volume modulo $\pi^2/r$ at $\Oc(K,r)$ for $r$ odd and modulo $2 \pi^2/r$ at $\Oc(K,r)$ for $r$ even and hence modulo $\pi^2$ at $M_r(K)$ for $r$ odd and modulo $2 \pi^2$ at $M_r(K)$ for $r$ even.
With our octahedral triangulation, the equation~(\ref{equ:cxN}) of Theorem~\ref{thm:Neumann} gives the equation of the following Theorem~\ref{thm:main} which is our main theorem.

\begin{theorem} \label{thm:main}
Let $K$ be a hyperbolic alternating knot with a reduced diagram and $V(z_1,...,z_{n})$ be the potential function of the diagram. Then for any 
$\z \in \Sc$ which will give the maximal volume, we have
\begin{align*}
i (\textnormal{vol}+i \, \textnormal{cs})(\Oc(K,r))=V (z_1, . . . , z_{n}) -\sum_{k=1}^{n} \left(z_k \frac{\partial V}{\partial z_k} \right) \log{z_k} \qquad \\
\left(\textnormal{mod } \frac{\pi^2}{r} \textnormal{ for } r \textnormal{ odd and } \textnormal{mod } \frac{2 \pi^2}{r} \textnormal{ for } r
 \textnormal{ even}\right).
\end{align*}
\end{theorem}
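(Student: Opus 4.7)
The plan is to apply Theorem~\ref{thm:Neumann} to the octahedral $4$-term triangulation of $M'=M_r(K)$ and show that, after choosing the flattenings coming from a $P$-decoration of the geometric representation, the right-hand side of equation~(\ref{equ:cxN}) collapses to the compact expression in the statement. First, using Zickert's machinery from~\cite{Z1,Z16,GZ17}, the geometric representation $\rho:\pi_1(M)\to \text{PSL}(2,\C)$, decorated by $P$-cosets on the truncated octahedral $4$-term triangulation, produces flattenings $[x_i';p_i',q_i']$ of the ideal tetrahedra. By~\cite[Theorem 6.5]{Z1} these flattenings automatically satisfy conditions (ii) and (iv) of Theorem~\ref{thm:Neumann}; condition (iii) is vacuous because $M'$ has no unfilled cusps. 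Only condition (i), parity along normal paths, is not guaranteed, which by~\cite[Lemma 11.3]{N2} can cost at most an indeterminacy of $\pi^2/2$ in the resulting complex volume.

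Second, I would carry out the identification crossing by crossing. At each crossing $k$ of the reduced alternating diagram, the octahedron is subdivided into four ideal tetrahedra whose shape parameters are expressed in terms of the vertex variables $z_a,z_b,z_c,z_d$ as in Figure~\ref{fig:shapeParameter}. Summing the four contributions $\epsilon_i R(x_i';p_i',q_i')$, the dilogarithmic parts $\text{Li}_2(x_i')-\pi^2/6$ of $R$ assemble into the four summands of Figure~\ref{fig:potential}, so that over all $n/2$ crossings and all $r$ sheets of the cover they recover the potential $V(z_1,\ldots,z_n)$. The $\tfrac{\pi i}{2}(p\log(1-x)+q\log x)$ terms of $R$, together with the logarithmic parts $\tfrac12\log z\log(1-z)$ of $\mathcal{R}$, combine across the four tetrahedra of each octahedron and across adjacent octahedra; after using the edge-orientation consistency of the triangulation discussed in the previous section, they telescope into $-\sum_{k=1}^{n}\bigl(z_k\,\partial V/\partial z_k\bigr)\log z_k$. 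This is essentially the tetrahedron-by-tetrahedron matching that underlies~\cite[Proposition 1.1, Lemma 3.1]{CKK1}, adapted to the branched cover.

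Third, since $\z\in\Sc$ the equations $\mathcal{H}$ hold, so $\exp(z_k\,\partial V/\partial z_k)=\exp(\pm 2\pi i/r)$. These are precisely Thurston's gluing and meridian-completeness equations for $M$, and hence for $M'$ after the Dehn filling that realizes the cone-angle $2\pi/r$, so the shape parameters determined by the $P$-decoration agree with $\z$ after exponentiation; this is what identifies the algebraic sum of the previous step with $\widehat{\beta}(M')$. It then remains to pin down the modulus. Steps one and two give the equality modulo $\pi^2/2$; to sharpen this to $\pi^2/r$ for odd $r$ and $2\pi^2/r$ for even $r$, I would invoke the analyticity argument outlined in the paragraph preceding the statement: the right-hand side is analytic in the deformation parameter, the identity holds modulo $\pi^2$ at $r=\infty$ by~\cite[Theorem 1.2]{CKK1}, and analytic continuation along the family $\{\Oc(K,r)\}$ forces the congruence to refine by the factor dictated by the cone-angle. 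This last point is the main obstacle: the local algebra of the first two steps is essentially bookkeeping over the octahedra, but the refinement from Zickert's ambient $\pi^2/2$ to the sharper modulus in the theorem is delicate and depends on combining analyticity with the known knot-complement case rather than on a purely local computation.
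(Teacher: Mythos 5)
Your overall route is the same as the paper's: apply Theorem~\ref{thm:Neumann} with Zickert's $P$-decoration flattenings (noting that conditions (ii), (iv) hold and (iii) is vacuous, while skipping (i) costs at most $\pi^2/2$), match the result against the potential function, and then sharpen the modulus to $\pi^2/r$ (resp.\ $2\pi^2/r$) using analyticity together with the knot-complement identity of \cite[Theorem 1.2]{CKK1}. All of that mirrors what the paper does, so the skeleton is right.

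The genuine gap is in your second step, where the log-linear part of $R$ is supposed to ``telescope'' into $-\sum_k(z_k\,\partial V/\partial z_k)\log z_k$. You attribute this telescoping to edge-orientation consistency and to \cite[Proposition 1.1, Lemma 3.1]{CKK1}, but neither is the mechanism that actually makes it work. Edge-orientation consistency is only the hygiene condition that permits applying the Neumann--Zickert formalism at all, and CKK1's Proposition 1.1 / Lemma 3.1 concern the hyperbolicity equations, not the flattening bookkeeping. The ingredient the paper actually needs is CKK1's Observation 4.1, recorded as Proposition~\ref{prop:const}: $\log\alpha_l-\log\beta_l\equiv\log z_l+A\pmod{\pi i}$ for a constant $A$ independent of $l$. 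From the explicit Ptolemy expressions~(\ref{equ:ps}),~(\ref{equ:qs}) one first extracts $z_l\,\partial V/\partial z_l=-\sum_m\sigma_l^m\tau_l^m q_l^m\pi i+\log\gamma_2-\log\gamma_3$ (Lemma~\ref{lem:meridian}); then one needs the fact that the $\gamma_2/\gamma_3$ terms cancel \emph{between consecutive sheets} of the $r$-fold cyclic cover, since $\gamma_3$ of one sheet is glued to $\gamma_2$ of the next; and finally one needs $\sum_1^r\sum_l z_l\,\partial V/\partial z_l=0$ to kill the additive constant $A$ coming from Proposition~\ref{prop:const} (Lemmas~\ref{lem:mainP},~\ref{lem:main1}). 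None of these three ingredients appears in your outline, and without them the claimed telescoping is just an assertion. If you supply Proposition~\ref{prop:const}, the inter-sheet $\gamma$-cancellation, and the vanishing of the total meridian log-holonomy, your step two becomes exactly the paper's Lemmas~\ref{lem:meridian}--\ref{lem:main1} and the proof goes through.
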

%%%%%%%%%%%%%%%%%%%%%%%%%%%%%%%%%%%%%%%%%%%%%%%%%%%%%%%%%%%%%%%%%%%
%%%%%%%%%%%%%%%%%%%%%%%%%%%%%%%%%%%%%%%%%%%%%%%%%%%%%%%%%%%%%%%%%%%%
\section{The proof of Theorem~\ref{thm:main}} \label{sec:proof}
The proof is parallel with that of Theorem 1.2 of~\cite{CKK1}. In this section, we always assume $z = (z_1, . . . , z_{n})$ is a solution in $\Sc$ which will give the geometric structure. $z$ for one sheet is different from another sheet but since 
the image of the fundamental class, $[\overline{M}]$, in $\widehat{\B}(\C)$ can be expressed by using the same $z$ for each sheet, we use the same $z$ for each sheet.  
To apply the methods in~\cite{N2, Z1,Z16,GZ17}, we use the same vertex ordering as in~\cite{CKK1}. To the vertices of Figure~\ref{fig:label}, we assign vertex orders from $0$ to $3$ to the vertices $\left(E_k,F_k,A_k(C_k),B_k(D_k)\right)$ in order. This assignment induces the vertex orderings of the four tetrahedra.
If the orientation of a tetrahedron from the vertex ordering is the same as the ambient orientation, we assign  the sign of the tetrahedron $\sigma = 1$, otherwise $\sigma = -1$. Each tetrahedron appears in the extended pre-Bloch group as $\sigma[u^{\sigma};p,q] \in \widehat{\Pc} (\C)$, where $\sigma$ is the sign of the tetrahedron, $u$ is the shape parameter assigned to the edge connecting the $0$th and $1$st vertices, and $p$, $q$ are certain integers.

Zickert used a \emph{Ptolemy coordinate} $c_{ij}$ for each edge to determine $p,\ q$ of 
$\sigma[u^{\sigma};p,q] \in \widehat{\Pc} (\C)$~\cite[Equation 3.6]{Z1}:
\begin{equation}
\begin{cases}
p \pi i=-\log{u^\sigma} +\log{c_{03}} +\log{c_{12}} -\log{c_{02}} -\log{c_{13}}, \\
q \pi i=\log{(1-u^{\sigma})}+\log{c_{02}} +\log{c_{13}} -\log{c_{23}} -\log{c_{01}}.
\end{cases}
\label{equ:pq}
\end{equation}

We use Ptolemy coordinates, but we don't need exact values since they all cancel out eventually.

\begin{figure} 
\begin{center}
\resizebox{10cm}{!}{\includegraphics{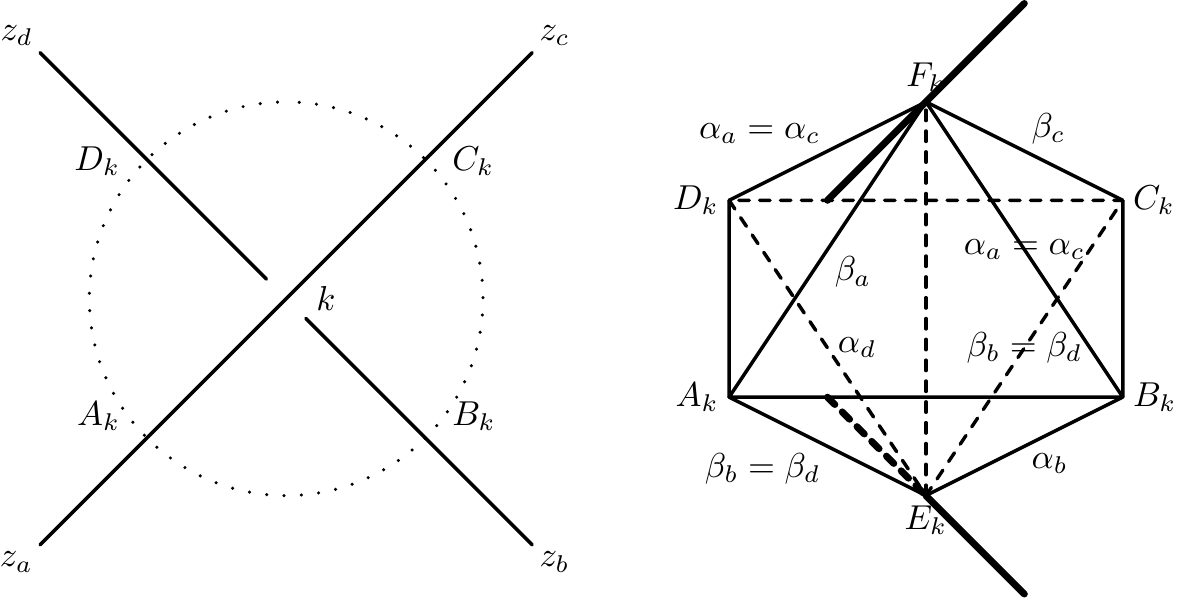}}
\caption{Labelings of non-horizontal edges.}
\label{fig:label}
\end{center} 
\end{figure}

\begin{figure} 
\begin{center}
\resizebox{10cm}{!}{\includegraphics{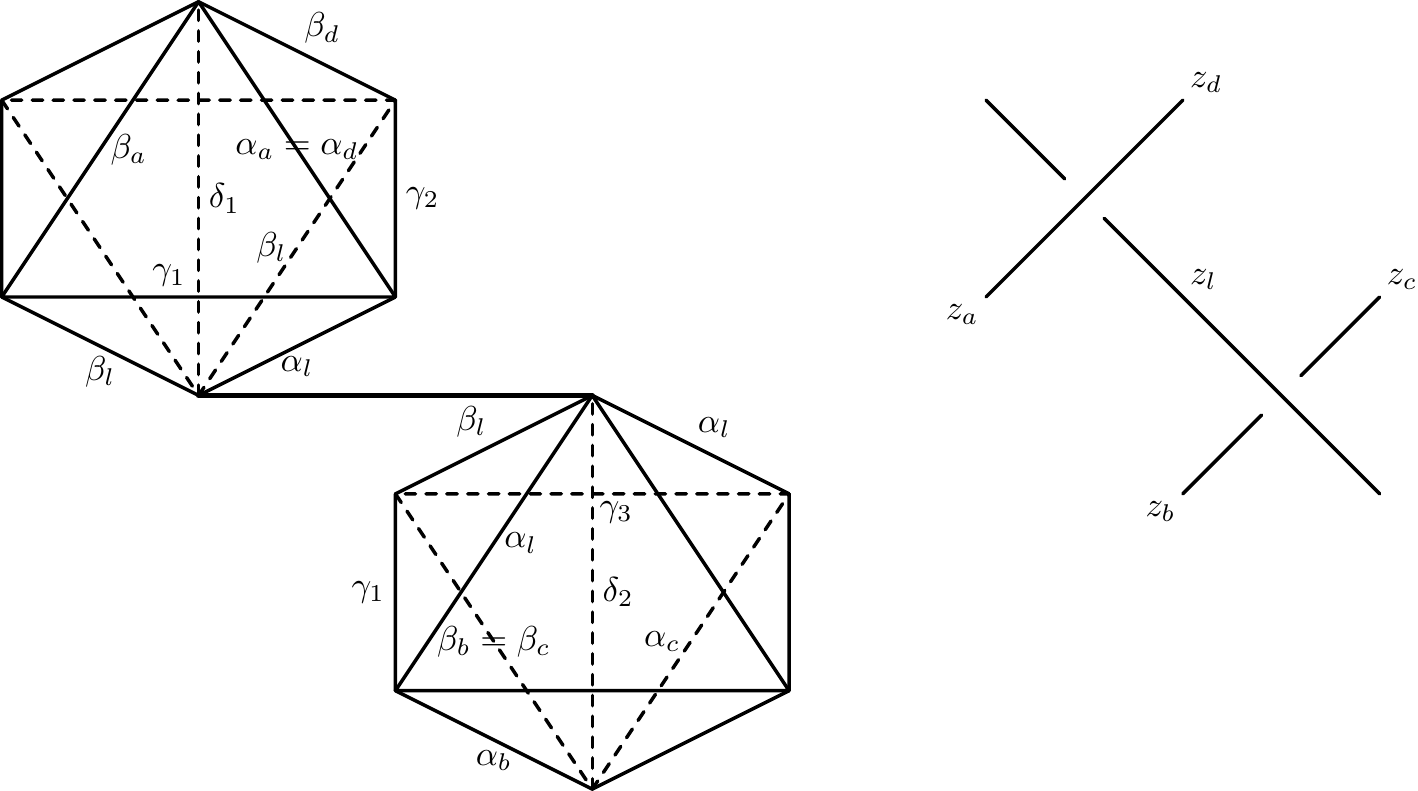}}
\caption{Gluing two octahedra.}
\label{fig:label1}
\end{center} 
\end{figure}

In~\cite{CKK1}, they used $\alpha_m,\ \beta_m,\ \gamma_l,\ \delta_k$ for $c_{ij}$. They assigned $\alpha_m$ and 
$\beta_m$ to non-horizontal edges, $\gamma_l$ to horizontal edges and $\delta_k$ to the edge $E_kF_k$ inside the octahedron as in Figure~\ref{fig:label} and Figure~\ref{fig:label1}, where $m = a, b, c, d$.  Although $\alpha_a = \alpha_c$ and $\beta_b = \beta_d$, they used $\alpha_a$ for the tetrahedron $E_kF_kA_kB_k$ and $E_kF_kA_kD_k$, $\alpha_c$ for $E_kF_kC_kB_k$ and $E_kF_kC_kD_k$, $\beta_b$ for $E_kF_kA_kB_k$ and $E_kF_kC_kB_k$, 
$\beta_d$ for 
$E_kF_kC_kD_k$ and $E_kF_kA_kD_k$. We follow their convention. Since we assigned vertex orders from $0$ to $3$ to the vertices 
$\left(E_k,F_k,A_k(C_k),B_k(D_k)\right)$ of Figure~\ref{fig:label} in order, the orientation of the octahedral triangulation induced by this ordering satisfies the edge-orientation consistency; when two edges are glued together in the triangulation, the orientations of the two edges induced by each vertex orderings coincide.

Even though Observation 4.1 of~\cite{CKK1} is for an octahedron of the octahedral triangulation of a link complement, the proof only uses the fact that they are using a $P$-decoration of the geometric representation. Hence we have the following Proposition~\ref{prop:const} which is an extension of Observation 4.1 of~\cite{CKK1} to our $M$.
\begin{proposition} [Observation 4.1 of~\cite{CKK1}] \label{prop:const}
For a hyperbolic reduced alternating knot diagram of a sheet of $M$ with the octahedral triangulation, we have
$$\log{\alpha_l} -\log{\beta_l} \equiv \log{z_l }+A \qquad (\textnormal{mod } \pi i),$$
for all $l = 1,\ldots,n$, where $n$ is the number of sides of the diagram and $A$ is a complex
constant number independent of $l$.
\end{proposition}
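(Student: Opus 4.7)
The plan is to imitate the proof of Observation 4.1 of~\cite{CKK1} essentially verbatim, since the remark preceding Proposition~\ref{prop:const} asserts that the only ingredient actually used there is that the decoration of the octahedral triangulation is by cosets of $P$, the subgroup of upper triangular matrices with $1$'s on the diagonal, together with the local combinatorics of the octahedron at a single crossing. The task is therefore to transport that argument from the case of a link complement to the case of a single sheet of the cyclic cover $M$, and to verify that nothing new goes wrong.

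First I would recall Zickert's expression for the Ptolemy coordinate $c_{ij}$ attached to an edge $v_iv_j$: pick any matrix representative of the $P$-coset decorating each endpoint, pair them in the prescribed way, and read off a single nonzero complex number. The key feature is that changing a representative within a $P$-coset rescales $c_{ij}$ only by factors that are localized at the two endpoints and do not depend on which side of the diagram we are looking at. At each crossing $k$ the two interior vertices $E_k$ and $F_k$ are decorated by cosets that are transported between consecutive crossings by the meridional holonomy, which lies in $P$, so their contributions to $\alpha_l$ and to $\beta_l$ agree up to a factor that is the same for every $l$. The four outer vertices $A_k,B_k,C_k,D_k$ sit in the four regions of the diagram adjacent to the crossing, and a $P$-representative for the region on the $l$-th side can be chosen so that the only free complex parameter is exactly the variable $z_l$ assigned in Section~4. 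Plugging these representatives into the formulae for $\alpha_l$ (the non-horizontal edge from the outer vertex to $E_k$) and for $\beta_l$ (the non-horizontal edge to $F_k$), the quotient $\alpha_l/\beta_l$ turns out to be $z_l$ times a factor that is manifestly independent of $l$; the logarithm of that factor is the constant $A$.

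The main obstacle I expect is bookkeeping rather than conceptual content: one must check that the sign ambiguity arising from the lift $\mathrm{SL}(2,\C)\to\mathrm{PSL}(2,\C)$ contributes only an overall $\pm 1$, which is responsible for the claimed $\pi i$ indeterminacy in the logarithms, and that restricting from the cyclic cover $M$ to a single sheet does not introduce any new identifications or new Ptolemy coordinates. For the latter, the restriction of the octahedral triangulation of $M$ to one sheet is combinatorially identical to the octahedral triangulation of $S^3\setminus K$ used in~\cite{CKK1}; the geometric representation of $\pi_1(M)$ and its $P$-decoration restrict to a $P$-decoration of that sheet; and by the convention adopted at the start of Section~\ref{sec:proof}, the same $z$ is used on every sheet. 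Consequently the identity $\log\alpha_l-\log\beta_l\equiv\log z_l+A\pmod{\pi i}$ is inherited directly from the link-complement case.
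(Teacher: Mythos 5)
Your approach matches the paper's essentially exactly: the paper offers no independent proof of Proposition~\ref{prop:const}, but only the remark that Observation 4.1 of~\cite{CKK1} carries over verbatim because its proof uses nothing beyond the fact that one is working with a $P$-decoration of the geometric representation, and you argue exactly that, checking in addition that passing from the link complement to a single sheet of the cyclic cover $M$ changes nothing combinatorially. Your extra reconstruction of how the Ptolemy coordinates are compared across a crossing is more detail than the paper supplies, but it does not diverge from the paper's (minimal) argument.
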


For $m = a,\ b,\ c,\ d$ and for the tetrahedron between the sides $z_l$ and $z_m$, for the sign $\sigma$, we use $\sigma_l^m$. Similarly, we use $u_l^m$ (the shape parameter), $p_l^m$ and $q_l^m$. We define $\tau_l^m = 1$ when $z_l$ is the numerator of $(u_l^m )^{\sigma_l^m}$ and $\tau_l^m = -1$ otherwise. Now, each tetrahedron appears in $\widehat{\Pc}(\C)$ as 
$\sigma_l^m[(u_l)^{\sigma_l^m} ; p_l^m , q_l^m]$.
By definition, we know
\begin{equation}
u_l^a=\frac{z_l}{z_a}, \quad u_l^b=\frac{z_b}{z_l}, \quad u_l^c=\frac{z_l}{z_c}, \quad u_l^d=\frac{z_d}{z_l}.
\label{equ:sol}
\end{equation}
and
$$\sigma_l^a =1,\ \sigma_l^b =1,\ \sigma_l^c =-1,\ \sigma_l^d =-1\ \ \text{and} \ \ \tau_l^a =1,\ \tau_l^b =-1,\ \tau_l^c =-1,\ \tau_l^d =1.$$
Using Equation~\ref{equ:pq} and Figure~\ref{fig:label1}, we decide $p_l^m$ and $q_l^m$ as follows:
\begin{equation}
\begin{cases}
\log{\frac{z_l}{z_a}} +p_l^a \pi i= \log{\alpha_l} +\log{\beta_a} -\log{\beta_l} -\log{\alpha_a}, \\
\log{\frac{z_b}{z_l}} +p_l^b \pi i= \log{\alpha_b} +\log{\beta_l} -\log{\beta_b} -\log{\alpha_l}, \\
\log{\frac{z_c}{z_l}} +p_l^c \pi i= \log{\alpha_c} +\log{\beta_l} -\log{\beta_c} -\log{\alpha_l}, \\
\log{\frac{z_l}{z_d}} +p_l^d \pi i= \log{\alpha_l} +\log{\beta_d} -\log{\beta_l} -\log{\alpha_d},
\end{cases}
\label{equ:ps}
\end{equation}
\begin{equation}
\begin{cases}
-\log{(1-\frac{z_l}{z_a})} +q_l^a \pi i= \log{\beta_l} +\log{\alpha_a} -\log{\gamma_1} -\log{\delta_1}, \\
-\log{(1-\frac{z_b}{z_l})} +q_l^b \pi i=  \log{\beta_b} +\log{\alpha_l} -\log{\gamma_1} -\log{\delta_2}, \\
-\log{(1-\frac{z_c}{z_l})} +q_l^c \pi i=  \log{\beta_c} +\log{\alpha_l} -\log{\gamma_3} -\log{\delta_2}, \\
-\log{(1-\frac{z_l}{z_d})} +q_l^d \pi i=  \log{\beta_l} +\log{\alpha_d} -\log{\gamma_2} -\log{\delta_1},
\end{cases}
\label{equ:qs}
\end{equation}
Note that
$$\sigma_l^m =\sigma_m^l,\  \tau_l^m =-\tau_m^l,\  u_l^m =u_m^l,\  p_l^m =p_m^l,\  q_l^m =q_m^l\ \ \text{and}$$
$$\sigma_l^m[(u_l^m )^{\sigma_l ^m}; p_l^m, q_l^m] = \sigma_m^l [(u_m^l)^{\sigma_m^l} ; p_m^l, q_m^l ] \in \Pc(\C).$$

The image of the fundamental class, $[M^{\prime}]$, in $\widehat{\B}(\C)$ 
can be written as 
$$\frac{1}{2} \sum_1^r\sum_{l,m} \sigma_l^m[(u_l^m )^{\sigma_l ^m}; p_l^m , q_l^m]$$ using our triangulation.
Hence, the potential function $V$ can be written
$$V(z_1, \ldots, z_n)=\frac{1}{2} \sum_{l,m} \sigma_l^m \text{Li}_2((u_l^m )^{\sigma_l^m}).$$
By direct calculation, we obtain
\begin{equation}
z_l \frac{\partial V}{\partial z_l}=-\sum_{m=a, \ldots, d} \sigma_l^m \tau_l^m \log{(1-(u_l^m )^{\sigma_l^m})}
\label{equ:meridian}
\end{equation}
for all $l=1, \ldots, n$.

\begin{lemma} \label{lem:meridian}
For all $l=1, \ldots, n$, we have
$$z_l \frac{\partial V}{\partial z_l}=-\sum_{m=a, \ldots, d} \sigma_l^m \tau_l^m q_l^m \pi i+\log{\gamma_2}-\log{\gamma_{3}}.$$
and hence
$$\sum_1^r z_l \frac{\partial V}{\partial z_l}=-\sum_1^r \sum_{m=a, \ldots, d} \sigma_l^m \tau_l^m q_l^m \pi i.$$
\end{lemma}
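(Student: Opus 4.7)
My plan is to prove the first identity by direct substitution and then to deduce the second as a telescoping consequence over the sheets of the cyclic cover. Starting from~(\ref{equ:meridian}), I would substitute the four relations of~(\ref{equ:qs}) — each one isolating a $\log(1-(u_l^m)^{\sigma_l^m})$ on the left — into the right-hand side of~(\ref{equ:meridian}), using the explicit sign pattern $\sigma_l^m\tau_l^m = +1, -1, +1, -1$ for $m = a, b, c, d$ recorded just after~(\ref{equ:sol}). The $q\pi i$ contributions then aggregate automatically into $-\sum_m \sigma_l^m\tau_l^m q_l^m \pi i$, matching the first term on the right of the lemma.

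Among the remaining Ptolemy contributions, the $\delta_1, \delta_2$ labels each occur twice with opposite signs and cancel; the ``central'' labels $\log\alpha_l$ and $\log\beta_l$ likewise appear in canceling pairs; the horizontal labels combine to give exactly $\log\gamma_2 - \log\gamma_3$; and the four non-horizontal neighbor labels produce a residue $\log\alpha_a - \log\alpha_d + \log\beta_c - \log\beta_b$. The crux of the proof is to show that this last residue is zero. I would verify it by invoking the edge identifications of the octahedral triangulation at the two crossings at the endpoints of side $l$ — in particular the pre-homotopy coincidences $A_k = C_k$ and $B_k = D_k$, which underlie the conventions $\alpha_a = \alpha_c$ and $\beta_b = \beta_d$ described in Section~4 — combined with the gluing conventions depicted in Figure~\ref{fig:label1}. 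This is the main obstacle of the proof, since it is the only non-bookkeeping portion and requires careful tracking of which Ptolemy labels at the two octahedra flanking side $l$ are identified with which.

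For the second identity, I would sum the first identity over the $r$ sheets of the cyclic cover $M \to S^{3}\setminus K$. The integers $q_l^m$ and the Ptolemy coordinates $\gamma_2, \gamma_3$ take possibly different values from sheet to sheet, but the horizontal edges carrying the $\gamma$-labels are glued cyclically across consecutive sheets by the deck transformation generated by the meridian. Consequently $\sum_{j=1}^r (\log\gamma_2^{(j)} - \log\gamma_3^{(j)})$ is a telescoping sum around the meridian that closes up to zero, eliminating the residual $\gamma$ term and yielding the stated formula. This second step is essentially automatic once the cyclic-covering structure is unpacked and poses no serious difficulty beyond the first.
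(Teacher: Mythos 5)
Your plan is structurally the same as the paper's proof: substitute~(\ref{equ:qs}) into~(\ref{equ:meridian}), track the sign pattern $\sigma_l^m\tau_l^m=(+1,-1,+1,-1)$, cancel, and telescope the residual $\gamma$-term over the $r$ sheets. Your bookkeeping of the $q_l^m\pi i$, $\delta$, central $\alpha_l,\beta_l$, and $\gamma$ contributions is right, and so is the surviving residue $\log\alpha_a-\log\alpha_d+\log\beta_c-\log\beta_b$. Your treatment of the second identity is also correct and coincides with the paper's: for fixed $l$, the Ptolemy coordinate $\gamma_3$ of the $j$-th sheet equals $\gamma_2$ of the $(j+1)$-th sheet, so $\sum_{j=1}^r\bigl(\log\gamma_2^{(j)}-\log\gamma_3^{(j)}\bigr)$ telescopes to zero.

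The gap is in how you propose to kill the residue. You cite $\alpha_a=\alpha_c$ and $\beta_b=\beta_d$ from the discussion preceding Proposition~\ref{prop:const}, but there $a,b,c,d$ denote the equatorial positions $A_k,B_k,C_k,D_k$ within a single octahedron, whereas in~(\ref{equ:qs}) and in this lemma they index the four diagram sides adjacent to $z_l$. These are not the same indexing. What actually makes the residue vanish — and what the paper's proof explicitly asserts — is $\alpha_a=\alpha_d$ and $\beta_b=\beta_c$: sides $a$ and $d$ flank $l$ at one crossing and occupy two equatorial positions that coincide pre-homotopy, and likewise $b$ and $c$ at the other crossing. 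So the conceptual ingredient you reach for (the pre-homotopy coincidences) is the right one, but the specific identities you name, if substituted into the residue, leave $\log\alpha_c-\log\alpha_d+\log\beta_c-\log\beta_d$, which is not obviously zero. Since you flag this cancellation as the main obstacle and leave it unverified, and the identities you invoke do not directly close it, this step of the argument is genuinely incomplete.
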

\begin{proof}
Using Equation~(\ref{equ:qs}), Equation~(\ref{equ:meridian}), $\alpha_a=\alpha_d$ and $\beta_b=\beta_c$,
we can directly calculate the following:
\begin{align*}
z_l \frac{\partial V}{\partial z_l}&=-\sum_{m=a, \ldots, d} \sigma_l^m \tau_l^m \log{(1-(u_l^m )^{\sigma_l^m})}\\
&=-q_l^a \pi i+q_l^b \pi i-q_l^c \pi i+q_l^d \pi i+\log{\gamma_2}-\log{\gamma_3}.
\label{equ:meridian1}
\end{align*}
Note that $\gamma_3$ of $i$-th sheet and $\gamma_2$ of $(i+1)$-th sheet are the same because they are glued together in the triangulation. Hence we get the second equality.
\end{proof}

\begin{lemma} \label{lem:mainP}
For all possible $l$ and $m$, we have
\begin{equation*}
\frac{1}{2} \sum_1^r \sum_{l,m} \sigma_l^m q_l^m \pi i \log{(u_l^m )^{\sigma_l^m}} 
\equiv -\sum_1^r \sum_{l=1}^n \left(z_l \frac{\partial V}{\partial z_l}\right) \log{z_l} \quad \left(\textnormal{mod } 2 \pi^2\right).
\end{equation*}
\end{lemma}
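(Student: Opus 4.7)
The plan is to rewrite the LHS in three moves: expand each logarithm into a $\log z_l$ and a $\log z_m$ piece modulo $2\pi i\Z$, use the swap symmetry $(l,m)\leftrightarrow(m,l)$ of the flattening data to collapse the two pieces, and close via Lemma~\ref{lem:meridian}. The branch-cut ambiguities in the first move will be exactly what produces the target modulus $2\pi^2$.

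The first move: from Equation~(\ref{equ:sol}) and the signs $\sigma_l^m,\tau_l^m$ recorded just after it, $(u_l^m)^{\sigma_l^m}$ is either $z_l/z_m$ or $z_m/z_l$, so in every case
\begin{equation*}
\log\!\bigl((u_l^m)^{\sigma_l^m}\bigr) \;=\; \tau_l^m\log z_l \;-\; \tau_l^m\log z_m \;+\; 2\pi i\,k_{l,m}
\end{equation*}
for some $k_{l,m}\in\Z$ dictated by the principal branch. Multiplying by $\sigma_l^m q_l^m\,\pi i$ turns each $2\pi i\,k_{l,m}$ into $-2\pi^2\,\sigma_l^m q_l^m k_{l,m}\in 2\pi^2\Z$, since $\sigma_l^m\in\{\pm1\}$ and $q_l^m\in\Z$. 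The coefficient $\tfrac12$ is matched by the fact that every tetrahedron is indexed twice (as $(l,m)$ and as $(m,l)$, with the same $k$), so the total correction is a full integer multiple of $2\pi^2$, not a half-integer one.

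The second move: writing $-\tau_l^m=\tau_m^l$ and splitting the resulting double sum into a $\log z_l$ half and a $\log z_m$ half, I relabel $l\leftrightarrow m$ in the $\log z_m$ half and invoke $\sigma_l^m=\sigma_m^l$ and $q_l^m=q_m^l$ to see that the two halves are identical. They combine, cancelling the $\tfrac12$, to give
\begin{equation*}
\text{LHS}\;\equiv\;\sum_{1}^{r}\sum_{l=1}^{n}\!\Bigl(\sum_{m=a,\ldots,d}\sigma_l^m\tau_l^m q_l^m\,\pi i\Bigr)\log z_l\pmod{2\pi^2}.
\end{equation*}

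The third move: the per-sheet identity obtained inside the proof of Lemma~\ref{lem:meridian} gives $\sum_m\sigma_l^m\tau_l^m q_l^m\,\pi i=-\,z_l\,\partial V/\partial z_l+\log\gamma_2-\log\gamma_3$. Summing over the $r$ sheets, the $\log\gamma_2-\log\gamma_3$ contribution telescopes to zero, because $\gamma_3$ on sheet $i$ coincides, as a Ptolemy coordinate, with $\gamma_2$ on sheet $i+1$, so their logarithms cancel exactly (not merely modulo $2\pi i$); this is the same telescoping used to close Lemma~\ref{lem:meridian}. What survives is precisely $-\sum_1^r\sum_l(z_l\,\partial V/\partial z_l)\log z_l$, which is the RHS. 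I expect the most delicate point to be the first move: tracking the branch-cut integers $k_{l,m}$ carefully, and matching them against the factor $\tfrac12$, so that their total contribution lands in $2\pi^2\Z$ rather than a finer sublattice.
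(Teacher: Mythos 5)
Your proposal is correct and follows essentially the same route as the paper: the paper's proof compresses your first and second moves into a single displayed congruence (citing Equation~(\ref{equ:sol})), then applies Lemma~\ref{lem:meridian} and the telescoping cancellation of $\log\gamma_2-\log\gamma_3$ exactly as in your third move. Your unpacking of the branch-cut bookkeeping — noting that each tetrahedron is counted twice with the same branch integer, so the $\tfrac12$ and the doubling combine to keep the correction inside $2\pi^2\Z$ — is the part the paper leaves implicit, and it is sound.
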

\begin{proof}
Note that $q_l^m$ is an integer. Using Equation~(\ref{equ:sol}) and Lemma~\ref{lem:meridian}, we can directly calculate
\begin{align*}
\frac{1}{2} \sum_1^r \sum_{l=1}^n \sum_{m=a, \ldots, d} \sigma_l^m q_l^m \pi i \log{(u_l^m )^{\sigma_l^m}} 
& \equiv \sum_1^r \sum_{l=1}^n\left(\sum_{m=a, \ldots, d} \sigma_l^m \tau_l^m q_l^m \pi i\right) \log{z_l} 
\quad \left(\textnormal{mod } 2 \pi^2\right)\\
&=-\sum_1^r \sum_{l=1}^n\left(z_l \frac{\partial V}{\partial z_l}-\log{\gamma_2}+\log{\gamma_3}\right) \log{z_l} \\
&=-\sum_1^r \sum_{l=1}^n \left(z_l \frac{\partial V}{\partial z_l}\right) \log{z_l}.
\end{align*}
\end{proof}

\begin{lemma} \label{lem:main1}
For all possible l and m, we have
\begin{equation*}
\frac{1}{2} \sum_1^r \sum_{l,m} \sigma_l^m \log{(1-(u_l^m )^{\sigma_l^m})}(\log{(u_l^m )^{\sigma_l^m}}+p_l^m \pi i)
\equiv - \sum_1^r \sum_{l=1}^n \left(z_l \frac{\partial V}{\partial z_l}\right) \log{z_l} \quad \left(\textnormal{mod } 2 \pi^2\right). 
\end{equation*}
\end{lemma}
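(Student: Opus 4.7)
My plan is to use Equation~(\ref{equ:ps}) to rewrite each factor $\log(u_l^m)^{\sigma_l^m}+p_l^m\pi i$ in terms of the quantities $D_l:=\log\alpha_l-\log\beta_l$, collapse the resulting double sum by the symmetries $\sigma_l^m=\sigma_m^l$, $\tau_l^m=-\tau_m^l$, $u_l^m=u_m^l$, and then invoke Equation~(\ref{equ:meridian}) together with Proposition~\ref{prop:const}.

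The first step is a short case check over $m\in\{a,b,c,d\}$: using the explicit signs $\sigma_l^a=\sigma_l^b=1$, $\sigma_l^c=\sigma_l^d=-1$ and $\tau_l^a=\tau_l^d=1$, $\tau_l^b=\tau_l^c=-1$, each of the four instances of~(\ref{equ:ps}) reduces to the uniform identity
\begin{equation*}
\sigma_l^m\bigl(\log(u_l^m)^{\sigma_l^m}+p_l^m\pi i\bigr)=\sigma_l^m\tau_l^m\,(D_l-D_m).
\end{equation*}
Substituting this into the LHS of the lemma and swapping $l\leftrightarrow m$ in the piece carrying $D_m$, the $D_l$ and $D_m$ contributions coincide, so the LHS collapses to
\begin{equation*}
\sum_1^r\sum_l D_l\sum_m\sigma_l^m\tau_l^m\log\bigl(1-(u_l^m)^{\sigma_l^m}\bigr)=-\sum_1^r\sum_l D_l\,z_l\frac{\partial V}{\partial z_l},
\end{equation*}
where the second equality is Equation~(\ref{equ:meridian}).

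By Proposition~\ref{prop:const}, $D_l=\log z_l+A+k_l\pi i$ with $k_l\in\Z$ and $A$ a fixed complex constant. Splitting the expression into three pieces, the $\log z_l$-piece is exactly the desired right hand side. The $k_l\pi i$-piece equals $-\pi i\sum_l k_l\sum_1^r z_l\partial V/\partial z_l$, and since summing the meridian condition $\exp(z_l\partial V/\partial z_l)=\exp(\pm 2\pi i/r)$ over the $r$ sheets yields $\sum_1^r z_l\partial V/\partial z_l\in 2\pi i\Z$, this piece lies in $2\pi^2\Z$ and is therefore $\equiv 0\pmod{2\pi^2}$.

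The delicate point, which I expect to be the main obstacle, is the $A$-term $-A\sum_1^r\sum_l z_l\partial V/\partial z_l$: because $A$ is only a fixed complex constant, not an integer multiple of $\pi i$, it must cancel identically rather than congruentially. This is supplied by observing that the potential function $V$ is homogeneous of degree zero in $(z_1,\ldots,z_n)$—by Definition~\ref{def:potential} every summand is a dilogarithm of a ratio $z_i/z_j$—so Euler's identity yields $\sum_l z_l\partial V/\partial z_l\equiv 0$ as a function of $\z$. This kills the $A$-contribution and completes the proof.
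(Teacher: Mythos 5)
Your proof follows the same route as the paper's: substitute Equation~(\ref{equ:ps}) into the left-hand side, use the symmetries $\sigma_l^m=\sigma_m^l$, $\tau_l^m=-\tau_m^l$, $u_l^m=u_m^l$ to collapse the double sum, reach $-\sum_1^r\sum_l\left(z_l\frac{\partial V}{\partial z_l}\right)(\log\alpha_l-\log\beta_l)$ via Equation~(\ref{equ:meridian}), and then apply Proposition~\ref{prop:const}. The difference is that you are more careful with the final congruence. The paper simply writes ``$\equiv\pmod{2\pi^2}$ by Proposition~\ref{prop:const}'' and then invokes $\sum_1^r\sum_l z_l\frac{\partial V}{\partial z_l}=0$ with no justification, whereas you isolate the two facts that actually make the step go through: $\sum_1^r z_l\frac{\partial V}{\partial z_l}\in 2\pi i\Z$ (from the defining equations of $\mathcal{H}$), which absorbs the mod-$\pi i$ errors of Proposition~\ref{prop:const} into $2\pi^2\Z$; and $\sum_l z_l\frac{\partial V}{\partial z_l}\equiv 0$ identically by Euler's identity, since $V$ is a sum of $\textnormal{Li}_2$ of ratios $(u_l^m)^{\sigma_l^m}$ and hence homogeneous of degree zero, which annihilates the $A$-term. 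The Euler observation in particular supplies a clean justification for an assertion the paper leaves implicit. Your argument is correct, and it is essentially the paper's argument with the modular bookkeeping made explicit.
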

\begin{proof}
By applying
Equation~(\ref{equ:ps}) and Equation~(\ref{equ:meridian}), we can verify
\begin{align*}
&\frac{1}{2}  \sum_1^r \sum_{l,m} \sigma_l^m \log{(1-(u_l^m )^{\sigma_l^m})}(\log{(u_l^m )^{\sigma_l^m}}+p_l^m \pi i)\\
&=  \sum_1^r \sum_{l=1}^n \left(\sum_{m=a, \ldots, d} \sigma_l^m \tau_l^m \log{(1-(u_l^m )^{\sigma_l^m})}\right) 
(\log{\alpha_l}-\log{\beta_l})\\
&=-  \sum_1^r \sum_{l=1}^n \left(z_l \frac{\partial V}{\partial z_l}\right) (\log{\alpha_l}-\log{\beta_l})\\
&\equiv -  \sum_1^r \sum_{l=1}^n \left(z_l \frac{\partial V}{\partial z_l}\right) (\log{z_l}+A) \quad  \left(\textnormal{mod } 2 \pi^2\right) 
\quad \textnormal{(by Proposition~\ref{prop:const})}\\
&= -  \sum_1^r \sum_{l=1}^n \left(z_l \frac{\partial V}{\partial z_l}\right) \log{z_l} 
\quad \left(\textnormal{by}  \sum_1^r \sum_{l=1}^n \left(z_l \frac{\partial V}{\partial z_l}\right)=0\right).\\
\end{align*}
\end{proof}

Combining Equation~(\ref{equ:cxN}), Lemma~\ref{lem:mainP}, and Lemma~\ref{lem:main1}, we prove our main theorem, Theorem~\ref{thm:main}:
\begin{align*}
i (\textnormal{vol}+i \, \textnormal{cs})(M)=i (\textnormal{vol}+i \, \textnormal{cs})(M') = R \left(\frac{1}{2} \sum_1^r \sum_{l,m} \sigma_l^m[(u_l^m )^{\sigma_l^m};p_l^m,q_l^m]\right)\\
=\frac{1}{2} \sum_1^r \sum_{l,m} \sigma_l^m \left(\text{Li}_2\left((u_l^m )^{\sigma_l^m}\right)
-\frac{\pi^2}{6}\right)
+\frac{1}{4} \sum_1^r \sum_{l,m} \sigma_l^m q_l^m \pi i \log{(u_l^m )^{\sigma_l^m}} \\
+\frac{1}{4} \sum_1^r \sum_{l,m} \sigma_l^m \log{(1-(u_l^m )^{\sigma_l^m})}(\log{(u_l^m )^{\sigma_l^m}}+p_l^m \pi i)\\
= r V (z_1, . . . , z_n)-r\sum_{k=1}^n \left(z_k \frac{\partial V}{\partial z_k} \right) \log{z_k} \qquad  
\left(\textnormal{mod } \pi^2\right).
\end{align*}
Hence we have
\begin{align*}
i (\textnormal{vol}+i \, \textnormal{cs})(\Oc(K,r))=V (z_1, . . . , z_n) -\sum_{k=1}^n \left(z_k \frac{\partial V}{\partial z_k} \right) \log{z_k} \qquad \\
\left(\textnormal{mod } \frac{\pi^2}{r} \textnormal{ for } r \textnormal{ odd and } \textnormal{mod } \frac{2 \pi^2}{r} \textnormal{ for } r
 \textnormal{ even}\right).
\end{align*}

%%%%%%%%%%%%%%%%%%%%%%%%%%%%%%%%%%%%%%%%%%%%%%%%%%%%%%%%
%%%%%%%%%%%%%%%%%%%%%%%%%%%%%%%%%%%%%%%%%%%%%%%%%%%%%%%%
\section{The shape of $J(2n,-2m)$ knot orbifolds and a root of the Riley-Mednykh polynomial} \label{sec:shape}
 In this section, we present the Riley-Mednykh polynomial of $J(2n,-2m)$ knot. We show that a root of a certain polynomial, which shares a root with the Riley-Mednykh polynomial (see Subsection~\ref{subsec:RMY}), determines the shape of 
 $J(2n,-2m)$ knot orbifolds. Theorem~\ref{thm:solution} states that a root of a certain polynomial gives a solution set for the system of hyperbolicity equations of $J(2n,-2m)$, and this, in turn, gives a set of shape parameters corresponding to ideal tetrahedra of a triangulated $J(2n,-2m)$ knot orbifold and the potential function of it (Definition~\ref{def:potential}) so that we can get the complex volume of it using Theorem~\ref{thm:main} in Section~\ref{sec:cvolume}.
 %%%%%%%%%%%%%%%%%%%%%%%%%%%%%%%%%%%%%%%%%%%%%%%%%%%%%%%%%%%%%%%%%%%%%%%%%%
\subsection{J(2n,-2m) knot} \label{subsec:Jnm}
A general reference for this section is ~\cite{HS}. 
A knot is $J(2n,-2m)$ knot if it has a regular two-dimensional projection of the form in Figure~\ref{fig:diagrams}. Note that $n$ and $m$ in this subsection is the half of $n$ and $m$ of Subsection~\ref{subsec:solution}, respectively.
It has 2n right-handed vertical crossings ($n$ right-handed vertical full twists) and $2 n$ left-handed horizontal crossings ($n$ left-handed horizontal full twists). 

We will use the following fundamental group of $J(2n,-2m)$ in~\cite{HS}. Let$X_{2n}^{2m}$ be $S^3 \backslash J(2n,-2m)$.

\begin{proposition}\label{theorem:fundamentalGroup}
$$\pi_1(X_{2n}^{2m})=\left\langle s,t \ |\ sw^mt^{-1}w^{-m}=1\right\rangle,$$
where $w=(t^{-1}s)^n(ts^{-1})^n$.
\end{proposition}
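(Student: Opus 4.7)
The plan is to compute the Wirtinger presentation of $\pi_1(X_{2n}^{2m})$ from the standard planar projection of $J(2n,-2m)$ shown in Figure~\ref{fig:diagrams} and to reduce it using the two-bridge simplification. First I would label each arc of the diagram by a meridian generator, designating $s$ and $t$ as the meridians of the two top over-arcs at one of the bridges, with a fixed orientation convention for the knot. Since $J(2n,-2m)$ is a two-bridge knot, every Wirtinger generator at each of the crossings is conjugate to either $s$ or $t$, so all additional generators can be eliminated in favor of $s$ and $t$, and all Wirtinger relations except one will become consequences of the others.

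Next I would track the meridian labels through the vertical twist region. At each crossing the Wirtinger rule conjugates the outgoing under-arc by the over-arc, with sign determined by the crossing sign. A direct induction on the number of full twists shows that traversing the $n$ right-handed vertical full twists conjugates the meridian labels by alternating powers of $t^{-1}s$ and $ts^{-1}$, with the net conjugator that records one full trip across the two halves of the vertical twist region equal to $(t^{-1}s)^n(ts^{-1})^n$, that is, to $w$. Iterating this through the $m$ left-handed horizontal full twists then multiplies the conjugator $m$ times, producing $w^m$ as the total conjugator from the top of the diagram to the bottom.

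The single remaining Wirtinger relation then comes from closing the diagram at the second bridge: the strand originally labeled $t$ at the top must reappear at the bottom as $s$ conjugated by $w^m$, which gives $s w^m = w^m t$, equivalently $s w^m t^{-1} w^{-m} = 1$. This yields exactly the presentation in the proposition.

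The main obstacle will be the sign and orientation bookkeeping across the twist regions: depending on the chosen convention for crossing sign, knot orientation, and which arc is designated as the outgoing meridian at each crossing, the word $w$ can appear as $(t^{-1}s)^n(ts^{-1})^n$ or as one of its cyclic conjugates, inverses, or rearrangements with $s$ and $t$ swapped. To land on the precise form stated, I would fix conventions up front, verify the base case $n=m=1$ (the figure-eight knot) by hand against its known two-bridge presentation, and then close the computation by induction on $n$ and $m$, using that the inductive step at each twist only modifies the current conjugator by one more factor of $t^{-1}s$ or $ts^{-1}$.
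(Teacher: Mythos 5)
The paper does not actually prove this proposition; it imports it verbatim from Hoste and Shanahan's paper on the A-polynomial of twist knots (reference [HS]), which is cited immediately before the statement. Your proposal is nonetheless a correct reconstruction of the argument one would give, and it is essentially the method used in [HS]: $J(2n,-2m)$ is a two-bridge knot, so the Wirtinger presentation collapses to two generators $s$ and $t$, and tracking the conjugations through each twist region produces a single surviving relation asserting that $s$ and $t$ are conjugate by the weaving word. The word $w=(t^{-1}s)^n(ts^{-1})^n$ records one pass of the understrand through the $2n$-crossing vertical twist region (note its length is $4n$, since the strand traverses both sides of that region), and the $2m$ horizontal crossings then make $w^m$ the total conjugator, giving $sw^m=w^mt$. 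Your caution about sign and orientation bookkeeping is exactly right: different crossing-sign and orientation conventions yield cyclic conjugates, inverses, or $s\leftrightarrow t$ swaps of $w$, and your plan of pinning down conventions by checking the $n=m=1$ (figure-eight) case against its known two-bridge presentation before running the induction on $n$ and $m$ is the correct way to land on the precise form stated.
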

%%%%%%%%%%%%%%%%%%%%%%%%%%%%%%%%%%%%%%%%%%%%%%%%%%%%%%%%%%%%%
%%%%%%%%%%%%%%%%%%%%%%%%%%%%%%%%%%%%%%%%%%%%%%%%%%%%%%%%%%%%%
\subsection{The Chebychev polynomial}\label{subsec:cheby}
Let $S_k(\xi)$ be the \emph{Chebychev polynomials} defined by $S_0(\xi) = 1$, $S_1(\xi) = \xi$ and $S_k(\xi) = \xi S_{k-1}(\xi) - S_{k-2}(\xi)$ for all integers $k$.
The following explicit formula for $S_k(v)$ can be obtained by solving the above recurrence relation~\cite{Tran2}.
\begin{align*}
S_n(\xi)=\sum_{0 \leq i \leq \lfloor \frac{n}{2} \rfloor} (-1)^i \binom{n-i}{i} \xi^{n-2i}
\end{align*}
for $n \geq 0$, $S_n(\xi) = -S_{-n-2}(\xi)$ for $n \leq -2$, and $S_{-1}(\xi) = 0$.
The following proposition~\ref{prop:cheby} can be proved using the Cayley-Hamilton theorem~\cite{Tran}.
\begin{proposition}~\cite[Proposition 2.4]{Tran}\label{prop:cheby}
Suppose $V=
\begin{bmatrix}
a & b \\ c & d
\end{bmatrix}
\in \textnormal{SL}_2(\C).$
Then
\begin{equation*}
V^k=
\begin{bmatrix}
S_k(\xi)-d S_{k-1}(\xi) & b S_{k-1} (\xi) \\ c S_{k-1} (\xi) & S_k(\xi)-a S_{k-1} (\xi)
\end{bmatrix}
\end{equation*}
where $\xi=\textnormal{tr}(V)=a+d$.
\end{proposition}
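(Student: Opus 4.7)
The plan is to deduce the formula from the Cayley--Hamilton theorem by showing first that $V^k$ can be written as a linear combination of $V$ and $I$ with Chebyshev coefficients, and then matching entries.

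First I would record the consequence of Cayley--Hamilton: since $V\in\operatorname{SL}_2(\C)$, its characteristic polynomial is $X^2-\xi X+1$, so
\begin{equation*}
V^2=\xi V-I.
\end{equation*}
From this the key intermediate claim is the identity
\begin{equation*}
V^k=S_{k-1}(\xi)\,V-S_{k-2}(\xi)\,I,\qquad k\in\Z.
\end{equation*}
For $k\ge 0$ this is proved by induction on $k$. The base cases $k=0,1$ are immediate from $S_{-1}=0$, $S_0=1$, $S_1=\xi$ (using $V^0=I$ and $V^1=V$). For the inductive step, multiply both sides of the inductive hypothesis by $V$ and substitute $V^2=\xi V-I$; the Chebyshev recurrence $S_k(\xi)=\xi S_{k-1}(\xi)-S_{k-2}(\xi)$ produces exactly the desired coefficients $S_k$ and $S_{k-1}$. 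For $k\le -2$, the same identity follows from the $k\ge 0$ case combined with the convention $S_n(\xi)=-S_{-n-2}(\xi)$ stated in Subsection~\ref{subsec:cheby} (and the fact that $V^{-1}=\xi I-V$, which follows from $V^2=\xi V-I$).

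Once the linear-combination formula is in hand, I would simply plug in $V=\begin{bmatrix}a&b\\ c&d\end{bmatrix}$ to obtain
\begin{equation*}
V^k=\begin{bmatrix}a S_{k-1}(\xi)-S_{k-2}(\xi) & b S_{k-1}(\xi)\\ c S_{k-1}(\xi) & d S_{k-1}(\xi)-S_{k-2}(\xi)\end{bmatrix},
\end{equation*}
and then rewrite the two diagonal entries using $S_k(\xi)=\xi S_{k-1}(\xi)-S_{k-2}(\xi)=(a+d)S_{k-1}(\xi)-S_{k-2}(\xi)$. This gives $a S_{k-1}(\xi)-S_{k-2}(\xi)=S_k(\xi)-d S_{k-1}(\xi)$ and symmetrically $d S_{k-1}(\xi)-S_{k-2}(\xi)=S_k(\xi)-a S_{k-1}(\xi)$, which are the entries claimed in the proposition.

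There is no real obstacle here; the only mild subtlety is bookkeeping for negative $k$, where one must check that the conventions $S_{-1}(\xi)=0$ and $S_n(\xi)=-S_{-n-2}(\xi)$ are compatible with $V^{-1}$ and propagate correctly through the induction. Everything else is direct application of Cayley--Hamilton and the Chebyshev recurrence.
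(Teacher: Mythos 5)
Your proof is correct and follows exactly the route the paper indicates (the paper states only that the proposition ``can be proved using the Cayley--Hamilton theorem'' and refers to~\cite{Tran} for details). Deriving $V^2=\xi V-I$, establishing $V^k=S_{k-1}(\xi)V-S_{k-2}(\xi)I$ by two-sided induction with the stated Chebyshev conventions, and then rewriting the diagonal entries via $S_{k-2}=(a+d)S_{k-1}-S_k$ is the intended argument.
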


%%%%%%%%%%%%%%%%%%%%%%%%%%%%%%%%%%%%%%%%%%%%%%%%%%%%%%%%%%%%%
%%%%%%%%%%%%%%%%%%%%%%%%%%%%%%%%%%%%%%%%%%%%%%%%%%%%%%%%%%%%%
\subsection{The Riley-Mednykh polynomial}
Let
\begin{center}
$$\begin{array}{ccccc}
\rho(s)=\left[\begin{array}{cc}
                       M &       1 \\
                        0      & M^{-1}  
                     \end{array} \right]                          
\text{,} \ \ \
\rho(t)=\left[\begin{array}{cc}
                   M &  0      \\
                   2-M^2-M^{-2}-x      & M^{-1} 
                 \end{array}  \right],
\end{array}$$
\end{center}
and let
 \begin{center}
$$\begin{array}{cc}
c=\left[\begin{array}{cc}
        0 & -\left(\sqrt{2-M^2-M^{-2}-x }\right)^{-1}     \\
       \sqrt{2-M^2-M^{-2}-x} & 0
       \end{array}  \right].
\end{array}$$
\end{center}

Then from the above Proposition~\ref{prop:cheby}, we get the following Theorem~\ref{thm:RMpolynomial}. 
Let $\rho(s)=S$, $\rho(t)=T$ and 
$\rho(w)=W$. 
Then $\textnormal{tr}(T^{-1}S)=x+M^2+M^{-2}=\textnormal{tr}(TS^{-1})$. Let $v=x+M^2+M^{-2}$ and Let $z=\text{Tr}(W)$.

\begin{theorem} \label{thm:RMpolynomial}
$\rho$ is a representation of $\pi_1(X_{2n}^{2m})$ if and only if $x$ is a root of the following Riley-Mednykh polynomial,
\begin{align*}
\phi_{2n}^{2m} (x,M)=S_m(z)+\left[-1+x  S_{n-1}(v)\left(S_n(v)+(1-v) S_{n-1}(v)\right)\right]S_{m-1}(z).
\end{align*}
\end{theorem}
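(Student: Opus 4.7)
The plan is to translate the single defining relation of $\pi_1(X_{2n}^{2m})$ into a matrix equation, evaluate both sides with the Chebyshev formula of Proposition~\ref{prop:cheby}, and read off the scalar condition equivalent to $\phi_{2n}^{2m}(x,M)=0$. Setting $S=\rho(s)$, $T=\rho(t)$, $W=\rho(w)$, the relator $sw^mt^{-1}w^{-m}$ maps to $I$ iff $SW^m=W^mT$. Because $\det S=\det T=1$ and $\textnormal{tr}\,S=\textnormal{tr}\,T=M+M^{-1}$, the equality of the two $\textnormal{SL}_2(\C)$ matrices $SW^m$ and $W^mT$ is forced by a single scalar equation, so I would extract it by comparing their $(2,1)$-entries.

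First I would compute the two building blocks $T^{-1}S$ and $TS^{-1}$. A direct calculation, using the identity $y^2:=2-M^2-M^{-2}-x=2-v$, shows that both have trace $v=x+M^2+M^{-2}$ and determinant $1$, with
\begin{equation*}
T^{-1}S=\begin{pmatrix}1 & M^{-1}\\ -y^2 M & v-1\end{pmatrix},\qquad TS^{-1}=\begin{pmatrix}1 & -M\\ y^2 M^{-1} & v-1\end{pmatrix}.
\end{equation*}
Proposition~\ref{prop:cheby} then expresses $(T^{-1}S)^n$ and $(TS^{-1})^n$ as explicit matrices in $S_{n-1}(v)$ and $S_n(v)$; multiplying them yields $W$, whose trace I would call $z$, and a second application of Proposition~\ref{prop:cheby} produces $W^m$ in terms of $S_{m-1}(z)$ and $S_m(z)$. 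Writing $a_n=S_n(v)-(v-1)S_{n-1}(v)$ and $d_n=S_n(v)-S_{n-1}(v)$, the Chebyshev recurrence yields the clean identity $a_n=S_n(v)+(1-v)S_{n-1}(v)=S_{n-1}(v)-S_{n-2}(v)$, which is precisely the factor appearing in $\phi_{2n}^{2m}$.

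Next I would compare the $(2,1)$-entries of $SW^m$ and $W^mT$; after factoring out the nonzero $y^2$, the relation becomes
\begin{equation*}
S_m(z)=\bigl[W_{11}+(M^{-1}-M)W_{21}/y^2\bigr]\,S_{m-1}(z).
\end{equation*}
The heart of the argument is to show that the bracketed quantity equals $1-xS_{n-1}(v)\bigl[S_n(v)+(1-v)S_{n-1}(v)\bigr]$, which is the coefficient of $S_{m-1}(z)$ in $\phi_{2n}^{2m}$. I expect to achieve this with three ingredients: (i) the definitional identity $a_n-d_n=(2-v)S_{n-1}(v)$; (ii) the determinant identity $a_nd_n+y^2S_{n-1}(v)^2=1$ coming from $\det(T^{-1}S)^n=1$; and (iii) $M^2+M^{-2}=v-x$, which forces the $M$-dependence to condense into the single monomial $-x$. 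The main obstacle is exactly this simplification: the naive expansion of the bracket is a messy combination of $a_n^2$, $(2-v)M^{-2}S_{n-1}(v)^2$, and cross terms in $(M^2-1)a_n$ and $(1-M^{-2})d_n$, and reorganising these via (i) and (ii) so that every occurrence of $M^{\pm 2}$ only enters through $M^2+M^{-2}$ is the one place real care is required. Once this identity is in hand, vanishing of the $(2,1)$-entry of $SW^m-W^mT$ is equivalent to $\phi_{2n}^{2m}(x,M)=0$, and the remaining three entries match automatically from $\det=1$ together with $\textnormal{tr}\,S=\textnormal{tr}\,T$, establishing the equivalence in both directions.
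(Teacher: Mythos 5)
Your proposal follows essentially the same route as the paper: compute $T^{-1}S$ and $TS^{-1}$, raise them to the $n$-th power via Proposition~\ref{prop:cheby}, form $W$ and $W^m$, and reduce the relator $SW^m=W^mT$ to a single scalar equation that becomes $\phi_{2n}^{2m}(x,M)=0$ after a Chebyshev simplification. The only cosmetic difference is how the scalar condition is extracted: you compare the $(2,1)$-entries directly, whereas the paper packages the same condition as the vanishing of $\textnormal{tr}(SW^mc)/\sqrt{2-v}$ via the auxiliary matrix $c$ and defers the final algebra to~\cite{HLMR} --- the two are identical since $W_{21}=(2-v)W_{12}$. One small caveat in your wording: the fact that only one entry needs checking is not forced merely by $\det=1$ and $\textnormal{tr}\,S=\textnormal{tr}\,T$, but uses the structural relation $(W^m)_{21}=(2-v)(W^m)_{12}$ coming from the explicit form of $W$, which you do in fact have in hand.
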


\begin{proof}
Since
\begin{align*}
T^{-1}S &=\left[\begin{array}{cc}
                       1 &      M^{-1}  \\
                       M( M^{-2}+(x-2) +M^2)\ \ \ \ \    & M^{-2} +x-1+M^2 
                     \end{array} \right]  \text{ and} \\
T S^{-1} &=\left[\begin{array}{cc}
                   1  &  -M      \\
                   -M^{-1} (M^{-2}+(x-2) +M^2)    & M^{-2} +x-1+M^2 
                 \end{array}  \right],
\end{align*}

\begin{align*}
(T^{-1} S)^n &=\left[\begin{array}{cc}
                       S_n(v)-(v-1) S_{n-1}(v) &      M^{-1} S_{n-1}(v) \\
                        M (v-2) S_{n-1}(v)  \ \ \ \ \  & S_n(v)-S_{n-1}(v)  
                     \end{array} \right] , \\    
(T S^{-1})^n &=\left[\begin{array}{cc}
                       S_n(v)-(v-1) S_{n-1}(v) &      -M S_{n-1}(v) \\
                        -M^{-1} (v-2) S_{n-1}(v)     & S_n(v)-S_{n-1}(v)  
                     \end{array} \right]     
\end{align*}

Hence 
\begin{equation*}
W=(T^{-1} S)^n (T S^{-1})^n 
=\begin{bmatrix}
W_{11} & W_{12} \\ (2-v) W_{12} & W_{22}
\end{bmatrix}
\ \text{where}
\end{equation*}

\begin{align*}
W_{11} & =S^2_n(v)+(2-2v) S_n(v) S_{n-1}(v)+(1+2 M^{-2}-2v-M^{-2}v+v^2) S^2_{n-1}(v) \\
W_{12} & =(M^{-1}-M) S_n(v) S_{n-1}(v)+(M v-M-M^{-1}) S^2_{n-1}(v)\\
W_{22} &=S^2_n(v)-2 S_n(v) S_{n-1}(v)+(1+2 M^2-M^2 v) S^2_{n-1}(v).
\end{align*}

%Let $y=\text{Tr}(W)-M^{-2}-M^2$ and $z=y+M^{-2}+M^2$.
Then, since $S_n^2 (v) -vS_n(v)S_{n-1}(v) + S_{n-1}^2(v) = 1$ (by~\cite[Lemma 2.1]{Tran1} or by induction),

\begin{align*}
z=W_{11}+W_{22} &=2(S^2_n(v)-v S_n(v) S_{n-1}(v)+S^2_{n-1}(v))\\
&+(2 M^{-2}+2 M^2-2v-M^{-2}v-M^2 v+v^2)S^2_{n-1}(v)\\
&=2+(v-2) x S^2_{n-1}(v).
\end{align*}

By Proposition~\ref{prop:cheby}, we have
\begin{align*}
(W)^m &=\left[\begin{array}{cc}
                       S_m(z)-W_{22} S_{m-1}(z) &     W_{12}  S_{m-1}(z) \\
                         (2-v) W_{12} S_{m-1}(z)  \ \ \ \ \  & S_m(z)-W_{11} S_{m-1}(z)  
                     \end{array} \right]. 
\end{align*}                       

Therefore $\textnormal{tr}(SW^mc)/\sqrt{2-v}\left(=(M-M^{-1}) W_{12}  S_{m-1}(z) +S_m(z)-W_{11} S_{m-1}(z)\right)$ gives 
$\phi_{2n}^{2m} (x,M)$~\cite{HLMR}.
\end{proof}
%%%%%%%%%%%%%%%%%%%%%%%%%%%%%%%%%%%%%%%%%%%%%%%%%%%%%%%%%%%%%%%%%%
%%%%%%%%%%%%%%%%%%%%%%%%%%%%%%%%%%%%%%%%%%%%%%%%%%%%%%%%%%%%%%%%%%%%%%%%%%%%
\subsection{$M^2$-deformed solutions for $J(2n,-2m)$} \label{subsec:solution}
Denote the solutions of $\mathcal{H}$ by $M^2$-deformed solutions. In this subsection we assume $J(2n,-2m)=J(n',-m')$ and then drop the prime ($'$) for notational convenience.
Let us assign segment variables $z_1,\cdots,z_{2n+2}, z'_1,\cdots,z'_{2m+2}$ as in Figure \ref{fig:diagrams}. 
Note that four segments are doubly labeled : $z'_1=z_{2n+1}, z'_2=z_1, z'_{2m+1}=z_{2n+2}$, and $z'_{2m+2}=z_2$.
\begin{figure}[H]
\centering
\resizebox{6cm}{!}{\includegraphics{diagramsEven.png}}
%\scalebox{1}{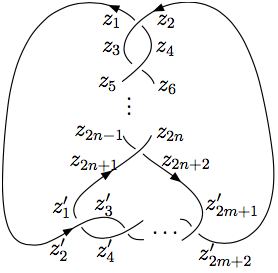}
\caption{A diagram of $J(n,-m)$ with segment variables.}
\label{fig:diagrams}
\end{figure} 

For a rational polynomial $P$ of $M$ we define $\widetilde{P}$ by a rational polynomial obtained from $P$ by replacing $M$ with 
$\dfrac{1}{M}$. Let $(B_j)_{j \in \Zbb}$ be the sequence defined by the following recurrence relations 
\begin{equation} \label{eqn:B}
\left \{
\begin{array}{rclr}
B_{j+1}&=& \sqrt{\Lambda} B_j +M^{2} B_{j-1}  &\textrm{ for odd } j \\
B_{j+1} &=& \sqrt{\Lambda} B_j +M^{-2} B_{j-1} & \textrm{ for even } j 			
\end{array} 
\right. 
\end{equation} 
with initial conditions $B_0=0,\ B_1=1$. Let sequences $(P_j)_{j \in \Zbb}$ and $(Q_j)_{j \in \Zbb}$ be also defined by the same recurrence relations as $B_j$ with initial conditions $P_0=z_1(z_2- \dfrac{z_3}{M^2}),\ P_1 =\sqrt{\Lambda}z_2 z_3$\ and\ $Q_1=\sqrt{\Lambda} z_3,\ Q_2=z_2-\dfrac{z_3}{M^2}$.  
Let $(B'_j)_{j \in \Zbb}$ be the sequence defined by the following recurrence relations:
\begin{equation}\label{eqn:BB}
\left \{
\begin{array}{rclr}
 B'_{j+1} &=&  \widetilde{(B_{n+1}}-\widetilde{B_{n-1})}B'_j +  M^{-2} B'_{j-1} & \textrm{ for odd } j \\[5pt]
 B'_{j+1} &=& (B_{n+1}-B_{n-1}) B'_j + M^{2} B'_{j-1}  &\textrm{ for even } j		
\end{array} 
\right. 
\end{equation} 
with initial condition $B'_0=0,\ B'_1=1$. 
Let sequences $(P'_j)_{j \in \Zbb}$ and $(Q'_j)_{j \in \Zbb}$ be defined by the same recurrence relations as $B'_j$ with initial conditions $P'_0=P_n,\  P'_1 =P_0$\ and\ $Q'_1=Q_2,\ Q'_2=Q_{n+2}$.

\begin{theorem} \label{thm:solution}
An $M^2$-deformed solution $(z_1,\cdots, z_{2n+2},\ z'_1,\cdots,\ z'_{2m+2})$ for the $J(n,-m)$ knot is given by
$$z_{2j-1}=\dfrac{P_{j-1}}{Q_{j+1}},\  z_{2j} =\dfrac{P_j}{Q_j},\ z'_{2j'-1}=\dfrac{P'_{j'-1}}{Q'_{j'+1}},\ \textrm{ and }\  z'_{2j'} =\dfrac{P'_{j'}}{Q'_{j'}}$$
for $1 \leq j \leq n+1, \ 1 \leq j' \leq m+1$ where $\Lambda$ satisfies  $B'_{m+1}+\widetilde{B'_m} \widetilde{B_{n-1}}=0$.
\end{theorem}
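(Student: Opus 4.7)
My plan is to reduce the system $\mathcal{H}$ of hyperbolicity equations for the $J(n,-m)$ diagram (with $n,m$ here meaning what was $2n,2m$ in Subsection~\ref{subsec:Jnm}) to two coupled three-term linear recurrences, one per twist region, and then identify the single remaining closure condition as the one displayed in the theorem. Starting from the potential function $V$ of Definition~\ref{def:potential}, each crossing contributes one multiplicative equation $\exp(z_k\,\partial V/\partial z_k)=1$, which in the $M^2$-deformation reads $\exp(z_k\,\partial V/\partial z_k)=M^{\pm 2}$ according to the Type I / Type II dichotomy of Figure~\ref{fig:side}. Inspection of Figure~\ref{fig:diagrams} shows that inside each twist region these equations couple only three consecutive segment variables at a time, so they can be attacked by induction along the region.

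For the vertical twist region $z_1,\dots,z_{2n+2}$, I would substitute the rational ansatz $z_{2j-1}=P_{j-1}/Q_{j+1}$ and $z_{2j}=P_j/Q_j$ into the crossing equations and clear denominators. After this substitution, the equation at the $j$th crossing becomes exactly a three-term linear identity which, after extracting the common factor $B_j$ between $P_j$ and $Q_{j+1}$, collapses to the recurrence~(\ref{eqn:B}). The alternation between $M^2$ and $M^{-2}$ reflects the alternation between Type I and Type II sides as one steps through the twist, and the initial data $B_0=0$, $B_1=1$, $P_0=z_1(z_2-z_3/M^2)$, $P_1=\sqrt{\Lambda}\,z_2z_3$, $Q_1=\sqrt{\Lambda}\,z_3$, $Q_2=z_2-z_3/M^2$ are exactly what is needed so that the first two crossings are satisfied tautologically. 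An induction on $j$ then propagates a solution along the entire vertical region.

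The same strategy applied to the horizontal twist region produces the primed sequences, with the role of $\sqrt{\Lambda}$ taken by $B_{n+1}-B_{n-1}$ (respectively $\widetilde{B_{n+1}}-\widetilde{B_{n-1}}$, reflecting the opposite handedness of the horizontal twists, which swaps $M$ and $M^{-1}$); this replacement is forced by the matching equations along the four shared segments $z'_1=z_{2n+1}$, $z'_2=z_1$, $z'_{2m+1}=z_{2n+2}$, $z'_{2m+2}=z_2$, and the same matching also pins down the initial values $P'_0=P_n$, $P'_1=P_0$, $Q'_1=Q_2$, $Q'_2=Q_{n+2}$. Finally, one equation of $\mathcal{H}$ remains unused: the one that closes the two regions around the relator $sw^mt^{-1}w^{-m}=1$ of Proposition~\ref{theorem:fundamentalGroup}; after the same ansatz it reduces to the single scalar condition $B'_{m+1}+\widetilde{B'_m}\widetilde{B_{n-1}}=0$ on $\Lambda$. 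The main obstacle is the bookkeeping of the alternating $M^{\pm 2}$ factors and the verification that the closure condition takes the claimed compact form; a useful consistency check is that under the substitution $v=x+M^2+M^{-2}$ this condition should coincide with the Riley-Mednykh polynomial of Theorem~\ref{thm:RMpolynomial}.
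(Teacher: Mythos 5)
Your plan coincides with the paper's proof: the twist-region induction you describe is exactly Lemma~\ref{lem:main}, the initial data for $j=1,2$ and the transfer to the primed sequences via the shared segments (Equations~(\ref{eqn:zz3}) and~(\ref{eqn:zz4})) are verified as you indicate, and the residual closure equations collapse to $B'_{m+1}+\widetilde{B'_m}\widetilde{B_{n-1}}=0$. (Minor point: a few equations of $\mathcal{H}$, not just one, are left over at the end --- $z'_{2m+2}=z_2$, $z'_{2m+1}=z_{2n+2}$ and the hyperbolicity equations at those segments --- but the paper shows they are all equivalent to the single displayed condition.)
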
 
%%%%%%%%%%%%%%%%%%%%%%%%%%%%%%%%%%%%%%%%%%%%%%%%%%%%%%%%%%%%%%%%%%%
\subsection{Proof of Theorem \ref{thm:solution}}
\begin{lemma} \label{lem:main} 
Let $(F_j)_{j \in \Zbb}$ be a sequence satisfying the following recurrence relations 	
\begin{equation*}
\left \{
\begin{array}{rclr}
F_{j+1} &=& W_1 F_j + M^{2} F_{j-1} & \textrm{ for odd } j \\
F_{j+1} &=& W_2 F_j + M^{-2} F_{j-1} & \textrm{ for even } j 			
\end{array}
\right.
\end{equation*} for some complex numbers $W_1$ and $W_2$. Let $(G_j)_{j\in \Zbb}$ be a sequence satisifying the same recurrence relations as $F_j$. Let us consider a diagram with segment variables as in Figure \ref{fig:local_twisting}. Suppose 
	\begin{equation}\label{eqn:main_lem}
		z_{2j-1}=\dfrac{F_{j-1}}{G_{j+1}} \textrm{ and }\ z_{2j}=\dfrac{F_j}{G_j}
	\end{equation}
	hold for $j=1,2$. Then $M^2$-hyperbolicity equations for $z_3,z_4,\cdots,z_{2n-1},z_{2n}$ hold if and only if 
	Equations~(\ref{eqn:main_lem}) hold for all $1 \leq j \leq n+1$.
	\begin{figure}[H]
		\centering
		\resizebox{7cm}{!}{\includegraphics{local_twistingEven.png}}
		%\scalebox{1}{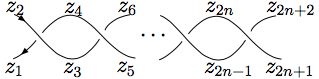}
		\caption{Local diagrams with twisting.}
		\label{fig:local_twisting}
	\end{figure}
\end{lemma}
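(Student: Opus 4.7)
The plan is an induction on $j$, exploiting that each $M^2$-hyperbolicity equation in the twist region is a single rational identity among four consecutive segment variables, and that this identity is algebraically equivalent to one step of the recurrence defining $(F_j)$ and $(G_j)$. The hypothesis that (\ref{eqn:main_lem}) holds for $j = 1, 2$ supplies the base case, so the task is to propagate the ansatz one step at a time through $j = 3, 4, \ldots, n+1$, each step controlled by exactly one hyperbolicity equation among the segments $z_3, z_4, \ldots, z_{2n}$.

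First I would localize the hyperbolicity equation at a single crossing in the twist region. Starting from Equation~(\ref{equ:meridian}), which writes $z_l\, \partial V/\partial z_l$ in terms of the four adjacent shape parameters, and specialising to the local picture of Figure~\ref{fig:local_twisting}, one obtains a rational identity that expresses the next segment variable as an $M^{\pm 2}$-weighted linear combination of the previous two. The sign in the exponent of $M$ flips from one crossing to the next because consecutive crossings in the twist present opposite sides in the sense of Type~I versus Type~II of Figure~\ref{fig:side}; this is precisely the parity-split visible in (\ref{eqn:B}), with the common complex coefficient playing the role of $W_1$ on odd steps and $W_2$ on even steps.

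Next I would substitute the ansatz $z_{2j-1} = F_{j-1}/G_{j+1}$ and $z_{2j} = F_j/G_j$ into this local identity and clear denominators. After cancelling the common factors $G_j G_{j+1}$ and reorganizing, the equation factors as
\[
\bigl(F_{j+1} - W_\star F_j - M^{\pm 2} F_{j-1}\bigr)\, G_{j+2} \;=\; F_j \bigl(G_{j+2} - W_\star G_{j+1} - M^{\pm 2} G_j\bigr),
\]
where $W_\star$ stands for $W_1$ or $W_2$ according to the parity of $j$. Since $F_j$ and $G_j$ satisfy the same two-term recurrence with parity-dependent weights, both sides vanish, so the hyperbolicity equation at that crossing is equivalent to the simultaneous recurrences $F_{j+1} = W_\star F_j + M^{\pm 2} F_{j-1}$ and $G_{j+2} = W_\star G_{j+1} + M^{\pm 2} G_j$ being satisfied. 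This equivalence is biconditional, so it delivers both directions of the lemma at once, and the induction extends the ansatz to all $1 \le j \le n+1$.

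The main obstacle is the careful bookkeeping: one must match the parity-dependent weight $M^{\pm 2}$ at each crossing with the correct case in (\ref{eqn:B}), align the signs $\sigma_l^m, \tau_l^m$ of Equation~(\ref{equ:meridian}) with the alternating Type~I/Type~II geometry of adjacent crossings, and verify that the denominator index shifts correctly (for instance that $G_{j+1}$ rather than $G_j$ appears for odd-indexed segments). Once these choices are pinned down by the two initial data at $j = 1, 2$, the identification of $W_1, W_2$ with the single linear coefficient in each hyperbolicity identity is forced, and the inductive propagation is then a routine consequence of the recurrence.
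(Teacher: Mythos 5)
Your high-level plan — induction along the twist region, with the hyperbolicity equations at each crossing pair propagating the ansatz one step via the defining recurrences for $(F_j)$, $(G_j)$ — is the same plan the paper carries out. But two of your intermediate claims are wrong in ways that would derail the computation if you tried to execute it.

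First, each $M^2$-hyperbolicity equation does \emph{not} express the next segment variable as an ``$M^{\pm 2}$-weighted linear combination of the previous two.'' It gives a genuinely nonlinear (M\"obius-type) rational expression. The paper solves the pair of hyperbolicity equations at the crossing to get, for even $j$,
\begin{equation*}
z_{2j+1}=\dfrac{z_{2j-1}z_{2j}}{M^2 z_{2j-1}+z_{2j}-M^2 z_{2j-3}},
\qquad
z_{2j+2}=\dfrac{z_{2j-1}}{M^2}+z_{2j}-\dfrac{z_{2j-1}z_{2j}}{M^2 z_{2j-2}},
\end{equation*}
and the linearity lives only in the lifted sequences $(F_j)$, $(G_j)$, not in the $z_l$'s. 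Conflating the two is exactly the kind of bookkeeping error that would make the ``routine'' final step fail.

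Second, your claimed factorization
\begin{equation*}
\bigl(F_{j+1}-W_\star F_j-M^{\pm 2}F_{j-1}\bigr)G_{j+2}
= F_j\bigl(G_{j+2}-W_\star G_{j+1}-M^{\pm 2}G_j\bigr)
\end{equation*}
has the indices off and, more importantly, absorbs two hyperbolicity equations into one identity. Each step of the induction advances by \emph{two} segments ($z_{2j+1}$ and $z_{2j+2}$) and therefore requires two hyperbolicity equations, one contributing a factor $M^2$ and the other $M^{-2}$. Tracing the actual substitution for $z_{2j+1}$ (even $j$), the ansatz $z_{2j-3}=F_{j-2}/G_j$, $z_{2j-1}=F_{j-1}/G_{j+1}$, $z_{2j}=F_j/G_j$ leads to a denominator $M^2 F_{j-1}G_j+F_j G_{j+1}-M^2 F_{j-2}G_{j+1}$, which one must show equals $F_{j-1}G_{j+2}$. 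That identity uses the recurrence $F_j=W_1 F_{j-1}+M^2 F_{j-2}$ to collapse $F_j-M^2F_{j-2}$ to $W_1F_{j-1}$, and then $G_{j+2}=W_1 G_{j+1}+M^2 G_j$ to reassemble. This is a two-stage cancellation, not the single symmetric factorization you wrote, and the two $F$- and $G$-recurrences you invoke are at parities one step apart (coefficients $W_1,M^2$ vs.\ $W_2,M^{-2}$), which your ``same $W_\star$ on both sides'' formulation does not respect.

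Finally, the appeal to Equation~(\ref{equ:meridian}) as the starting point is a detour: the paper works directly with the $M^2$-hyperbolicity equations written in segment variables; no shape parameters, $\sigma_l^m$, or $\tau_l^m$ appear in the proof of this lemma. So: same strategy, but the key algebraic claims in your middle step are inaccurate and need to be redone, and the passage to linear recurrences happens only after the change of variables to $(F_j,G_j)$, not at the level of the $z_l$'s.
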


\begin{proof} For even $j$ from the $M^2$-hyperbolicity equations for $z_{2j-1}$ and $z_{2j}$, 
$$\dfrac{z_{2j-1}-z_{2j}}{z_{2j-1}-z_{2j-3}} \cdot \dfrac{z_{2j}(z_{2j-1}-z_{2j+1})}{z_{2j+1}(z_{2j-1}-z_{2j})}=M^2 \textrm{ and } \dfrac{z_{2j-2}(z_{2j}-z_{2j-1})}{z_{2j-1}(z_{2j}-z_{2j	-2})} \cdot \dfrac{z_{2j}-z_{2j+2}}{z_{2j}-z_{2j-1}}=M^{-2},$$ we obtain
	\begin{equation*}
		\left \{
			\begin{array}{ccc}
				z_{2j+1}&=&\dfrac{z_{2j-1} z_{2j}}{M^2z_{2j-1}+z_{2j} - M^2 z_{2j-3}}\\[10pt]
				z_{2j+2}&=& \dfrac{z_{2j-1}}{M^2}+ z_{2j} - \dfrac{z_{2j-1} z_{2j}}{M^2z_{2j-2}}								
			\end{array}
		\right.
	\end{equation*} 
Then one can directly check that  Equations~(\ref{eqn:main_lem}) hold for $j$ if they holds for $j-1$ and $j-2$. In a similar manner, one can check the case of odd $j$.
\end{proof}

Recall that we defined the sequences $(P_j)_{j \in \Zbb}, (Q_j)_{j \in \Zbb}$, and $(B_j)_{j \in \Zbb}$ by recursion relation (\ref{eqn:B}) with the initial conditions 
$Q_1= \sqrt{\Lambda} z_3, Q_2 = z_2-\dfrac{z_3}{M^2},\ P_0 =z_1(z_2-\dfrac{z_3}{M^2}), P_1 = \sqrt{\Lambda}z_2z_3$, and $B_0=0,B_1=1$. One can check that	
 Equations~(\ref{eqn:main_lem}) hold trivially for $j=1,2$. Therefore, by Lemma~\ref{lem:main}, Equations~(\ref{eqn:main_lem}) hold for $j=1,2,\cdots, n+1$. Note that $P_{j} = \dfrac{P_2}{\sqrt{\Lambda}} B_{j}- \dfrac{P_0}{\sqrt{\Lambda}} B_{j-2}$ holds for all $j$ and hence
\begin{equation} \label{eqn:explicit_PQ}
\left \{
\begin{array}{rcl}
Q_j&=&\dfrac{M^2 z_2-z_3}{M^2\sqrt{\Lambda}} B_j- \dfrac{M^2z_2-(1+M^2\Lambda)z_3}{M^4 \sqrt{\Lambda}} B_{j-2}\\[10pt]
P_j&=&\dfrac{\Lambda z_2 z_3 + z_1(M^2z_2-z_3)}{\sqrt{\Lambda}} B_j- \dfrac{z_1(M^2 z_2-z_3)}{M^2\sqrt{\Lambda}} B_{j-2}
\end{array}
\right.
\end{equation}

We take $z'_1=z_{2n+1}$ and $z'_2=z_1$, and then solve $M$-hyperbolicity equations for $z_{1}$ and $z_{2n+1}$,  
$\dfrac{z_3(z'_2-z_2)}{z_2(z'_2-z_3)}\cdot \dfrac{z'_2-z'_1}{z'_2-z'_4}=M^2$ and $ \dfrac{z'_2(z'_1-z'_3)}{z'_3(z'_1-z'_2)} \cdot \dfrac{z'_1-z_{2n-1}}{z'_1-z_{2n+2}} =M^2$, to obtain $z'_4$ and $z'_3$, respectively. One obtain
\begin{equation} \label{eqn:zz4}
z'_4=\dfrac{\widetilde{(B_{n+1}}-\widetilde{B_{n-1})} P_0 + M^{-2}P_n} {Q_{n+2}}
\end{equation} and
\begin{equation}\label{eqn:zz3}
z'_3=\dfrac{P_0}{(B_{n+1}-B_{n-1})Q_{n+2}+M^2Q_2}. 
\end{equation}
	
Now we take $P'_0=P_n,\ P'_1 =P_0$ and $Q'_1=Q_2,\ Q'_2=Q_{n+2}$, and define sequences $(P'_j)_{j \in \Zbb}$ and 
$(Q'_j)_{j \in \Zbb}$ by the recurrence relation (\ref{eqn:BB}). 
One can check that 
\begin{equation}\label{eqn:z'}
z'_{2j-1} = \dfrac{P'_{j-1}}{Q'_{j+1}} \textrm{ and } z'_{2j} =\dfrac{P'_j}{Q'_j}
\end{equation} 
hold for $j=1,2$. Therefore, by Lemma \ref{lem:main}, Equations~(\ref{eqn:z'}) hold for $j=1,\cdots,m+1$. 
	
We finally solve equations $z'_{2m+2}=z_2,\ z'_{2m+1}=z_{2n+2}$ and $M^2$-hyperbolicity equations for these segments.
One can check that $z'_{2m+2}=z_2$ if and only if 
\begin{equation} \label{eqn:last1}
(z_1-z_2)(z_2-\dfrac{z_3}{M^2})(B'_{m+1}+\widetilde{B'_m} \widetilde{B_{n-1}})=0.
\end{equation} Furthermore, one can check that $B'_{m+1}+\widetilde{B'_m} \widetilde{B_{n-1}}=0$ is necessary and sufficient condition for the remaining equations.
	
\subsection{Detailed computation}		
Equation (\ref{eqn:zz4}) : From $M^2$-hyperbolicity equation for $z_1$ we have $$z'_4= z_1 -\dfrac{z_3(z_1-z_2)(z_1-z_{2n+1})}{M^2 z_2(z_1-z_3)}.$$ On the other hand, we have $$z_1-z_{2n+1} = \dfrac{P_0}{Q_2}- \dfrac{P_n}{Q_{n+2}}=\dfrac{P_0 Q_{n+2}-P_n Q_2}{Q_2 Q_{n+2}} = \dfrac{(P_0 Q_3-P_1 Q_2)B_n}{Q_2 Q_{n+2}}= \dfrac{z_2(z_1-z_3)\sqrt{\Lambda} B_n }{Q_{n+2}}$$ and thus $z'_4= z_1-\dfrac{z_3(z_1-z_2) \sqrt{\Lambda} B_n}{M^2Q_{n+2}}$. Putting the explicit forms of Equations~(\ref{eqn:explicit_PQ}) of $P_j$ and $Q_j$ one results in Equation (\ref{eqn:zz4}).\\[10pt] 
	
Equation (\ref{eqn:zz3}) : From $M^2$-hyperbolicity equation  for $z_{2n+1}$ we have 
$$\dfrac{ Q_{n+2} P_0- \dfrac{P_n P_0}{z'_3}}{Q_{n+2}P_0 - Q_2 P_n}\cdot \dfrac{P_n Q_{n+1}-P_{n-1}Q_{n+2}}{P_n Q_{n+1} - P_{n+1}Q_{n+2}} =M^2.$$
One can check that $Q_{n+2}P_0 - Q_2P_n = (Q_3 P_0 - Q_2 P_1) B_n$ and $P_n Q_{n+1}-P_{n-1}Q_{n+2} = M^2(Q_3 P_0 -Q_2 P_1  )$ through induction. 
Thus the equation for $z_{2n+1}$ is further simplied to 
$$ Q_{n+2} P_0 - \dfrac{P_n P_0}{z'_3}= (P_n Q_{n+1}-P_{n+1} Q_{n+2})B_n.$$ 
We now put $z'_3=\dfrac{P_0}{(B_{n+1}-B_{n-1})Q_{n+2}+M^2 Q_2}$ and check  the equality holds : 
\begin{equation*}
\begin{array}{crcl}
&  Q_{n+2} P_0 - \dfrac{P_n P_0}{z'_3} &=& (P_n Q_{n+1}-P_{n+1} Q_{n+2})B_n \\
\Leftrightarrow&  Q_{n+2}P_0 -M^2 Q_2 P_n &=& P_n(Q_{n+1}B_n - Q_{n+2} B_{n-1})-Q_{n+2} (P_{n+1}B_n - P_n B_{n+1}) \\[3pt]
\Leftrightarrow& Q_{n+2}P_0 -M^2 Q_2 P_n  &=& P_n(Q_{1}B_0 - Q_{2} B_{-1})-Q_{n+2} (P_{1}B_{0} - P_{0}B_{1})
\end{array}
\end{equation*}\\[5pt]
	
Equation (\ref{eqn:last1}) : One can check that 
$$Q'_j = \dfrac{Q_{n+2}}{\widetilde{W}} B'_j - \dfrac{M^2(Q_{n+2}-\widetilde{W} Q_2)}{ \widetilde{W} }B'_{j-2} \textrm{ and } P'_j = (P_0 +\dfrac{P_n}{M^2\widetilde{W}})B'_j - \dfrac{P_n}{\widetilde{W}} B'_{j-2}$$ where $W=B_{n+1}-B_{n-1}$. 
Applying these equations, we obtain 
\begin{equation*}
z'_{2m+2} = z_2 \Leftrightarrow P_0 B'_{m+1} + \dfrac{P_n W}{M^2 \widetilde{W}} B'_m = z_2 \left(\dfrac{W Q_{n+2}}{\widetilde{W}} B'_m + M^2 Q_2 B'_{m-1}\right)
\end{equation*}
Applying $B'_{m-1}=M^{-2}(B'_{m+1}-W B'_m)$, we have
\begin{equation*}
	(P_0-z_2 Q_2) B'_{m+1}+(\dfrac{P_n}{M^2 \widetilde{W}} - \dfrac{z_2 Q_{n+2}}{\widetilde{W}}+z_2 Q_2)WB'_m=0
\end{equation*}
On the other hand, using Equation (\ref{eqn:explicit_PQ}) we obtain $P_n-M^2z_2 Q_{n+2}=(z_1 \widetilde{B_{n-1}}-z_2 \widetilde{B_{n+1}})(M^2 z_2 -z_3)$ and thus
\begin{equation*}
	(z_1-z_2)(z_2-\dfrac{z_3}{M^2}) B'_{m+1}+(z_1-z_2)(z_2-\dfrac{z_3}{M^2})\dfrac{W}{\widetilde{W}}\widetilde{B_{n-1}}B'_m=0
\end{equation*}
We finally obtain Equation (\ref{eqn:last1}) from $W B'_m = \widetilde{W} \widetilde{B'_m}$.
%%%%%%%%%%%%%%%%%%%%%%%%%%%%%%%%%%%%%%%%%%%%%%%%%%%%%%%%%%%%%%%%%%%%%%%%%%%%
\subsection{Riley-Mednykh polynomial versus the polynomial in Theorem~\ref{thm:solution}} \label{subsec:RMY} 
$$\Lambda=\dfrac{(M^2 z_2-z_3)(z_4-M^2 z_1)}{M^2 z_2 z_3},$$
which is the same $\Lambda$ of the polynomial in Theorem~\ref{thm:solution} is the same as $x$ in the Riley-Mednykh Polynomial.
Note that

\begin{align*}
B'_{2m+1}=B'_{2m+2}-M^{-2} B'_{2m}=S_{m}(z)-M^{-2}S_{m-1}(z),
\end{align*}
\begin{align*}
\widetilde{B'_{2m}}& =\left(B_{2n+1}-B_{2n-1}\right) S_{m-1}(z)\\
&=\left(B_{2n+2}-(M^2+1) B_{2n}+M^2 B_{2n-2}\right) S_{m-1}(z)\\
&=\sqrt{\Lambda}\left(S_n(v)-(M^2+1) S_{n-1}(v)+M^2 S_{n-2}(v)\right) S_{m-1}(z), \qquad \textrm{and}
\end{align*}
\begin{align*}
\widetilde{B_{2n-1}}&=\left(B_{2n}-M^{-2} B_{2n-2}\right) =\dfrac{1}{\sqrt{\Lambda}} \left(S_{n-1}(v)-M^{-2} S_{n-2}(v)\right).
\end{align*}
Hence,
\begin{align*}
&B'_{2m+1}+\widetilde{B'_{2m}} \widetilde{B_{2n-1}}\\
=& S_{m}(z)-M^{-2}S_{m-1}(z)\\
+& \sqrt{\Lambda}\left(S_n(v)-(M^2+1) S_{n-1}(v)+M^2 S_{n-2}(v)\right) S_{m-1}(z) \cdot \dfrac{1}{\sqrt{\Lambda}}\left(S_{n-1}(v)-M^{-2} S_{n-2}(v) \right)\\
=& S_m(z)+\left[-1+x  S_{n-1}(v)\left(S_n(v)+(1-v) S_{n-1}(v)\right)\right]S_{m-1}(z)
\end{align*}
where the second equality comes from $M^{-2}\left(-1+S_n^2(v)-v S_n(v) S_{n-1}(v)\right)=-M^{-2} S_{n-1}^2(v)$, 
$-S_n^2(v)+v S_n(v) S_{n-1}(v)-S_{n-1}^2(v)=-1$, and $v-M^2-M^{-2}=x$.
%%%%%%%%%%%%%%%%%%%%%%%%%%%%%%%%%%%%%%%%%%%%%%%%%%%%%%%%%%%%%%%%%%%%%%%%%%%%
%%%%%%%%%%%%%%%%%%%%%%%%%%%%%%%%%%%%%%%%%%%%%%%%%%%%%%%%%%%%%%%%%%%%%%%%%%%%
\section{The complex volume of $J(2n,-2m)$ knot orbifolds} \label{sec:cvolume}
\begin{center}
\begin{longtable}{ccccc}
\caption{The complex volume of $\Oc(J(2n,-2m),r)$}
\label{tab1} \\
\hline
\hline
2n & 2m &r & $\Lambda$ & $i (\textnormal{vol}+i \, \textnormal{cs})(\Oc(J(2n,-2m),r))$  \\
\hline
\hline
 2 & 2 & 4 & 0.50000000000+0.86602540378 i & 4.93480220+0.50747080 i \\
 2 & 2 & 5 & 0.19098300563+0.98159334328 i & 3.94784176+0.93720685 i \\
 2 & 2 & 6 & 1.0000000000 i & 3.28986813+1.22128746 i \\
 2 & 2 & 7 & -0.12348980186+0.99234584135 i & 2.81988697+1.41175465 i \\
 2 & 2 & 8 & -0.20710678119+0.97831834348 i & 2.46740110+1.54386328 i \\
 2 & 2 & 9 & -0.26604444312+0.96396076387 i & 2.19324542+1.63860068 i \\
 2 & 2 & 10 & -0.30901699437+0.95105651630 i & 1.97392088+1.70857095 i \\
 \hline
 \hline
 4 & 2 & 3 & -0.00755235938+0.51311579560 i & -1.72777496+0.65424589 i \\
 4 & 2 & 4 & -0.66235897862+0.56227951206 i & -2.86069760+1.64973788 i \\
 4 & 2 & 5 & -1.14512417943+0.49515849117 i & -3.52261279+2.17889926 i \\
 4 & 2 & 6 & -1.4735614834+0.4447718088 i & -3.98384119+2.47479442 i \\
 4 & 2 & 7 & -1.6951586442+0.4128392229 i & -4.32981047+2.65528044 i \\
 4 & 2 & 8 & -1.8483725477+0.3920489691 i & -4.60000833+2.77325068 i \\
 4 & 2 & 9 & -1.9576333576+0.3778733777 i & -4.81702709+2.85453378 i \\
 4 & 2 & 10 & -2.0378731536+0.3677976891 i & -4.99515284+2.91289047 i \\
 \hline
 \hline
 6 & 2 & 3 & -0.34814666229+0.31569801686 i & -10.75695074+1.13433042 i \\
 6 & 2 & 4 & -1.20132676664+0.23416798146 i & -11.59884416+2.11140104 i \\
 6 & 2 & 5 & -1.7917735176+0.1897754754 i & -12.16988689+2.57280120 i \\
 6 & 2 & 6 & -2.1642555280+0.1684190525 i & -12.59691853+2.82779165 i \\
 6 & 2 & 7 & -2.4067175323+0.1564178761 i & -12.92651554+2.98374073 i \\
 6 & 2 & 8 & -2.5713976386+0.1489407589 i & -13.18751976+3.08602289 i \\
 6 & 2 & 9 & -2.6876701962+0.1439431862 i & -13.39880958+3.15668617 i \\
 6 & 2 & 10 & -2.7725358624+0.1404280473 i & -13.57309955+3.20751918 i \\
 \hline
 \hline
 8 & 2 & 3 & -0.54389004227+0.16911750433 i & -20.0575360+1.3947589 i \\
 8 & 2 & 4 & -1.4978713005+0.1084850975 i & -20.7988735+2.2927467 i \\
 8 & 2 & 5 & -2.1083314541+0.0880656991 i & -21.3519528+2.7201894 i \\
 8 & 2 & 6 & -2.4875315243+0.0785270427 i & -21.7725912+2.9590288 i \\
 8 & 2 & 7 & -2.7331459278+0.0731788731 i & -22.0990803+3.1058946 i \\
 8 & 2 & 8 & -2.8995917551+0.0698442219 i & -22.3583063+3.2025092 i \\
 8 & 2 & 9 & -3.0169658119+0.0676131174 i & -22.5684716+3.2693798 i \\
 8 & 2 & 10 & -3.1025702596+0.0660424620 i & -22.7420003+3.3175427 i \\
 \hline
 \hline
 4 & 4 & 3 & -0.01995829900+0.67074673939 i & -6.57973627+1.76234394 i \\
 4 & 4 & 4 & -0.65138781887+0.75874495678 i & -4.93480220+3.24423449 i \\
 4 & 4 & 5 & -1.09562461427+0.72919649189 i & 3.94784176+3.97745021 i \\
 4 & 4 & 6 & -1.3944865812+0.6970117896 i & 3.28986813+4.37318299 i \\
 4 & 4 & 7 & -1.5970449028+0.6740436568 i & 2.81988697+4.60830212 i \\
 4 & 4 & 8 & -1.7379419910+0.6582482515 i & 2.46740110+4.75900044 i \\
 4 & 4 & 9 & -1.8389076499+0.6471661767 i & 2.19324542+4.86133969 i \\
 4 & 4 & 10 & -1.9133245675+0.6391599851 i & 1.97392088+4.93402245 i \\
 \hline
 \hline
 6 & 4 & 3 & -0.36885266082+0.39541631271 i & -15.6448767+2.3664151 i \\
 6 & 4 & 4 & -1.18085480670+0.33499526222 i & -3.80501685+3.88384993 i \\
 6 & 4 & 5 & -1.7484570879+0.2899432605 i & -4.65930462+4.57539873 i \\
 6 & 4 & 6 & -2.1104185077+0.2675510172 i & -5.25746397+4.94040123 i \\
 6 & 4 & 7 & -2.3472075730+0.2551155981 i & -5.69589799+5.15588538 i \\
 6 & 4 & 8 & -2.5084646758+0.2474927372 i & -6.02968482+5.29371439 i \\
 6 & 4 & 9 & -2.6225041618+0.2424689530 i & -6.29179953+5.38724779 i \\
 6 & 4 & 10 & -2.7058278045+0.2389749437 i & -6.50288760+5.45366136 i \\
  \hline
 \hline
 8 & 4 & 3 & -0.55148024290+0.22035009026 i & -11.80550829+2.69666965 i \\
 8 & 4 & 4 & -1.4793108288+0.1570719654 i & -12.98529702+4.14627902 i \\
 8 & 4 & 5 & -2.0815052970+0.1334359887 i & -13.8048284+4.8015144 i \\
 8 & 4 & 6 & -2.4569465684+0.1227104828 i & -14.3889447+5.1491619 i \\
 8 & 4 & 7 & -2.7004978700+0.1169010753 i & -14.8201199+5.3550962 i \\
 8 & 4 & 8 & -2.8656815894+0.1133764243 i & -15.1496102+5.4870899 i \\
 8 & 4 & 9 & -2.9822244600+0.1110658370 i & -15.4089512+5.5767829 i \\
 8 & 4 & 10 & -3.0672511476+0.1094638695 i & -15.6181360+5.6405274 i \\
 \hline
 \hline
 6 & 6 & 3 & -0.36832830461+0.41796312477 i & -6.57973627+3.00700610 i \\
 6 & 6 & 4 & -1.17336278071+0.37033951782 i & -4.93480220+4.58598208 i \\
 6 & 6 & 5 & -1.7388563928+0.3287642847 i & -3.94784176+5.24850166 i \\
 6 & 6 & 6 & -2.1006443843+0.3072137377 i & -3.28986813+5.58749586 i \\
 6 & 6 & 7 & -2.3375970291+0.2949582783 i & 8.45966092+5.78525139 i \\
 6 & 6 & 8 & -2.4990415162+0.2873325482 i & 7.40220330+5.91106833 i \\
 6 & 6 & 9 & -2.6132384092+0.2822567483 i & 6.57973627+5.99621798 i \\
 6 & 6 & 10 & -2.6966865061+0.2787021856 i & 5.92176264+6.05658540 i \\
 \hline
 \hline
 8 & 6 & 3 & -0.55031840230+0.23422255676 i & -15.9003428+3.3568071 i \\
 8 & 6 & 4 & -1.4724598837+0.1741858085 i & -14.1080969+4.8770938 i \\
 8 & 6 & 5 & -2.0740943804+0.1513440279 i & -5.18916029+5.50547127 i \\
 8 & 6 & 6 & -2.4497492814+0.1408348434 i & -5.83308815+5.82764190 i \\
 8 & 6 & 7 & -2.6935499999+0.1350627376 i & -0.65624423+6.01595120 i \\
 8 & 6 & 8 & -2.8589296734+0.1315233377 i & -1.70960154+6.13591760 i \\
 8 & 6 & 9 & -2.9756183923+0.1291852183 i & -2.52945070+6.21718119 i \\
 8 & 6 & 10 & -3.0607538896+0.1275551112 i & -3.18564118+6.27483020 i \\
 \hline
 \hline
 8 & 8 & 3 & -0.54952912495+0.24145990295 i & -19.7392088+3.7168687 i \\
 8 & 8 & 4 & -1.4711363066+0.1829744778 i & -4.93480220+5.18236078 i \\
 8 & 8 & 5 & -2.0729882919+0.1597713050 i & 3.94784176+5.77639916 i \\
 8 & 8 & 6 & -2.4487718630+0.1489534753 i & 3.28986813+6.08135309 i \\
 8 & 8 & 7 & -2.6926455464+0.1429787861 i & 2.81988697+6.25992917 i \\
 8 & 8 & 8 & -2.8580700183+0.1393045859 i & 2.46740110+6.37384633 i \\
 8 & 8 & 9 & -2.9747882014+0.1368732592 i & 6.57973627+6.45108372 i \\
 8 & 8 & 10 & -3.0599441386+0.1351762880 i & 5.92176264+6.50591258 i \\
\hline
\hline
\end{longtable}
\end{center}

Table~\ref{tab1} gives the complex volume of $\Oc(J(2n,-2m),r)$ for $n$ between $1$ and $4$, $m$ between 1 and 4 with 
$n \geq m$, and $r$ between $3$ and $10$ except the non-hyperbolic orbifold $\Oc(J(2,-2),3)$. One needs to read the Chern-Simons invariant  modulo $2 \pi^2/r$ for $r$ even and $\pi^2/r$ for $r$ odd. One can check that the Chern-Simons invariants of 
$\Oc(J(2m,-2m),r)$ for $m$ between 1 and 4 for $r$ between $3$ and $10$ except the non-hyperbolic orbifold $\Oc(J(2,-2),3)$ are all zero modulo $2 \pi^2/r$ for $r$ even and $\pi^2/r$ for $r$ odd.

We used Mathematica for the calculations. We record here that our data in Table~\ref{tab1} and the orbifold volumes obtained from  SnapPy match up up to existing decimal points. Also, our data in Table~\ref{tab1} and the orbifold Chern-Simons invariants presented in~\cite{HL17} match up up to existing digits when the Chern-Simons invariants in Table~\ref{tab1} are divided by $-2 \pi^2$ and then read by modulo 1/r for $r$ even and are divided by $-\pi^2$, read by modulo 1/r and then divided by $2$ for $r$ odd.
%%%%%%%%%%%%%%%%%%%%%%%%%%%%%%%%%%%%%%%%%%%%%%%%%%%%%%%%%%%%%%%%%%%
\section{Acknowledgemet}
This work was supported by Basic Science Research Program through the National Research Foundation of Korea (NRF) funded by the Ministry of Education, Science and Technology (No. NRF-2018005847). We would like to thank Cristian Zickert , Jinseok Cho, Seokbeom Yoon (for providing us the $M^2$-deformed solutions for $J(2n,-2m)$), and Darren Long.
%%%%%%%%%%%%%%%%%%%%%%%%%%%%%%%%%%%%%%%%%%%%%%%%%%%%%%%%%%%%%%%%%%

%\bibliographystyle{plain}
%\bibliography{conereference}
%\clearpage
%\printindex
%%%%%%%%%%%%%%%%%%%%%%%%%%%%%%%%%%%%%%%%%%%%%%%%%%%%%%%%%%%%
%%%%%%%%%%%%%%%%%%%%%%%%%%%%%%%%%%%%%%%%%%%%%%%%%%%%%%%
\end{document}